\theoremstyle{plain}
\newtheorem{theorem}{Theorem}[section]
\newtheorem{lemma}[theorem]{Lemma}
\newtheorem{corollary}[theorem]{Corollary}
\DeclareFontFamily{U}{wncy}{}
    \DeclareFontShape{U}{wncy}{m}{n}{<->wncyr10}{}
    \DeclareSymbolFont{mcy}{U}{wncy}{m}{n}
    \DeclareMathSymbol{\Sh}{\mathord}{mcy}{"58}
\theoremstyle{definition}
\newcommand{\appsection}[1]{\let\oldthesection\thesection
\renewcommand{\thesection}{Appendix \oldthesection}
\section{#1}\let\thesection\oldthesection}
\theoremstyle{remark}
\newtheorem{remark}[theorem]{Remark}
\newtheorem{example}[theorem]{Example}
\def\Z{{\mathbb{Z}}}
\def\Q{{\mathbb{Q}}}
\def\C{{\mathbb{C}}}
\def\P{{\mathbb{P}}}
\DeclareMathOperator{\End}{End}
\DeclareMathOperator{\Poles}{Poles}
\begin{document}
\title{Linear $x$-coordinate relations of triples on elliptic curves}

\author{Jerson Caro}
\address{Department of Mathematics \& Statistics, Boston University, 665 Commonwealth Avenue, Boston, MA 02215, USA}
\email[J. Caro]{jlcaro@bu.edu}%

\author{Natalia Garcia-Fritz}
\address{Departamento de Matem\'aticas,
Pontificia Universidad Cat\'olica de Chile.
Facultad de Matem\'aticas,
4860 Av.\ Vicu\~na Mackenna,
Macul, RM, Chile}
\email[N. Garcia-Fritz]{natalia.garcia@uc.cl}%

\thanks{J.C. was supported by a Simons Foundation grant (Grant \#550023). N. G.-F. was supported by ANID Fondecyt regular grant 1211004.}

\date{\today}
\subjclass[2020]{Primary: 11G05; Secondary: 14K12, 30D99, 14G05} 
\keywords{Elliptic curves, linear relations, $x$-coordinates, ranks, Weierstrass elliptic functions}

\begin{abstract} For an elliptic curve $E$ defined over the field $\C$ of complex numbers, we classify all translates of elliptic curves in $E^3$ such that the $x$-coordinates satisfy a linear equation. This classification enables us to establish a relation between the rank of finite rank subgroups of $E$ and triples in $E$ whose $x$-coordinates are linearly related.  
The method of proof integrates complex analytic techniques on elliptic curves with results of Gao, Ge and Kühne on Uniform Mordell-Lang Conjecture for subvarieties in abelian varieties.
\end{abstract}
\maketitle

\section{Introduction}

Let $E$ be an elliptic curve defined over the field $\C$ of complex numbers, given by the equation 
\begin{equation}\label{Weierstrass}
y^2=x^3+Ax^2+Bx+C,    
\end{equation}
and with $e_E$ its point at infinity. Given a triple $(c_1,c_2,c_3)$ of non-zero complex numbers, we are interested in bounding the number of solutions of the equation 
\begin{equation}\label{eq123}
c_1x_1+c_2x_2+c_3x_3=0
\end{equation} 
with $x_1,x_2,x_3$ in the $x$-coordinates of points, different from $e_E$, on finite rank subgroups $\Gamma<E(\C)$. From now on, the $x$-coordinate of a point $P\in E(\mathbb{C})\setminus\{e_E\}$ is denoted by $x(P)$. 

The main results we prove in this manuscript are Theorem \ref{MainTheorem} and Theorem \ref{arithmain}, that give us as a consequence such a bound depending only on the rank of $\Gamma$: 

\begin{corollary}\label{cor1}
There exists an absolute constant $D$ such that the following holds: 

Let $E/\C$ be a non-CM elliptic curve, and let $(c_1,c_2,c_3)$ be a triple of non-zero complex numbers such that no subsum of $c_1+c_2+c_3$ is equal to zero. Let $\Gamma$ be a subgroup of $E(\mathbb{C})$ of finite rank $r$. 
The number of triples $(P_1,P_2,P_3)\in (\Gamma\setminus\{e_E\})^3$ satisfying $$c_1x(P_1)+c_2x(P_2)+c_3x(P_3)=0$$ is at most $D^{r+1}$. 
\end{corollary}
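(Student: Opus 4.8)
The plan is to deduce the corollary by combining the geometric classification (Theorem \ref{MainTheorem}) with the arithmetic/counting statement (Theorem \ref{arithmain}), which together should reduce everything to a uniform Mordell--Lang count inside the abelian threefold $E^3$. First I would set up the relevant subvariety: the locus $V\subset E^3$ of triples $(P_1,P_2,P_3)$ with $c_1x(P_1)+c_2x(P_2)+c_3x(P_3)=0$, taking care of the points with some coordinate equal to $e_E$ separately (there are only finitely many such, bounded in terms of the degrees). The equation defines a divisor in $E^3$, and the content of Theorem \ref{MainTheorem} is presumably that every positive-dimensional irreducible component of $V$ is a translate of an elliptic curve (equivalently, a coset of an abelian subvariety), together with a classification of which triples $(c_1,c_2,c_3)$ produce such components and of what the translating point can be. The no-vanishing-subsum hypothesis on $(c_1,c_2,c_3)$ is exactly what one expects to rule out the degenerate families (e.g. the diagonal-type curves coming from $x(P)=x(-P)$), so under that hypothesis $V$ should have only finitely many positive-dimensional components, or those components should meet $\Gamma^3$ in a controlled way.

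Next I would invoke the uniform Mordell--Lang results of Gao--Ge--Kühne. For a fixed subvariety $X$ of an abelian variety $A$ of dimension $g$ over $\overline{\Q}$ (or $\C$, via a specialization argument), the number of points of $(X\setminus X^{\mathrm{ba}})(\C)\cap\Gamma$, where $\Gamma$ has finite rank $r$ and $X^{\mathrm{ba}}$ is the union of positive-dimensional cosets contained in $X$, is bounded by $c(A,X)^{r+1}$ — and in the uniform version the constant depends only on numerical invariants (dimension, degree with respect to a chosen polarization), not on $A$ or $X$ themselves. Here $A=E^3$, $g=3$, and $X=V$ (or its closure); its degree with respect to the natural symmetric ample class on $E^3$ is bounded absolutely because the defining equation $c_1x_1+c_2x_2+c_3x_3=0$ has bounded bidegree in each factor independent of $E$ and the $c_i$. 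This yields an absolute constant $D_0$ with $\#\big((V\setminus V^{\mathrm{ba}})\cap \Gamma^3\big)\le D_0^{\,r+1}$.

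It then remains to count the contribution of $V^{\mathrm{ba}}$, i.e.\ of the positive-dimensional components. This is where Theorem \ref{arithmain} enters: having classified these components as translates of elliptic curves and, crucially, constrained their translating points (the hypothesis on subsums forcing any such translate, if it meets $\Gamma$, to in fact be a subgroup translate by a torsion-like or $\Gamma$-rational point, or else to be empty of $\Gamma$-points beyond a bounded set), one reduces to counting points of $\Gamma$ on an elliptic curve $E'\hookrightarrow E^3$, possibly intersected with the further linear conditions — but a translate of a one-dimensional abelian subvariety carries a rank-$\le r$ subgroup, so the relevant count is again governed by the same Mordell--Lang bound one dimension down, contributing at most (finitely many components, absolute in number) times a bound of the form $D_1^{\,r+1}$. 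The non-CM hypothesis is used precisely to limit the elliptic subvarieties of $E^3$: when $E$ has no complex multiplication, $\End(E^3)=\mathrm{Mat}_3(\Z)$, so the abelian subvarieties of $E^3$ are images of maps given by integer matrices, and only boundedly many of small degree can lie in a fixed-degree divisor; in the CM case one would get infinitely many and the statement would fail.

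\textbf{Main obstacle.} I expect the genuinely hard part to be the geometric classification underlying Theorem \ref{MainTheorem}: determining all elliptic curves $C\subset E^3$ and all points $t\in E^3$ such that $t+C$ lies in the hypersurface $c_1x_1+c_2x_2+c_3x_3=0$. Writing points via the Weierstrass parametrization $z\mapsto(\wp(z),\wp'(z))$, one is asking for entire functions $u_i(z)=a_iz+b_i$ (parametrizing the elliptic curve $C$ and translate $t$ in the universal cover) such that $z\mapsto c_1\wp(u_1(z))+c_2\wp(u_2(z))+c_3\wp(u_3(z))$ is identically zero; this is an elliptic-function identity that one must solve completely, classifying the slopes $a_i$ (which must be commensurable, hence WLOG in a lattice-compatible ratio) and the shifts $b_i$, using the addition formula for $\wp$ and analysis of poles. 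Controlling the translating point $t$ — showing the subsum hypothesis eliminates the ``bad'' shifts so that the remaining families do not pollute the count — is the delicate step; the subsequent Mordell--Lang bookkeeping, by contrast, is essentially a black-box application once the degrees are pinned down.
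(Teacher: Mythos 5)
Your overall strategy --- classify the positive-dimensional cosets inside $X_{c_1,c_2,c_3}$ and then apply uniform Mordell--Lang --- is the paper's strategy, but the step where you handle those cosets contains a genuine gap, and the fix you propose cannot work. You suggest that the translates of elliptic curves lying in $X_{c_1,c_2,c_3}$ are finitely many and that the $\Gamma$-points on each can then be bounded ``by the same Mordell--Lang bound one dimension down.'' That is impossible: a translate of an elliptic curve is its own Kawamata locus, so Theorem \ref{ggk} (equivalently Theorem \ref{arithmain}) gives no bound whatsoever for $\Gamma$-points on it, and a curve of type (b) or (c) of Theorem \ref{MainTheorem}, e.g.\ $\{(P,[u]P,[v]P)\colon P\in E\}$, meets $\Gamma^3$ in infinitely many points as soon as $\Gamma$ is infinite; if any such curve were present in $X_{c_1,c_2,c_3}$, the bound $D^{r+1}$ would simply be false (this is exactly why the corollary excludes CM curves). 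The missing piece --- which is the entire content of the paper's short proof, given Theorems \ref{MainTheorem} and \ref{arithmain} --- is the verification that under the hypotheses no such curve exists: since $E$ is non-CM, $\End(E)=\Z$ and every unit $u$ satisfies $u^2=1$, so a type (b) curve would force $c_1+c_2=0$ (or a coordinate permutation thereof) and a type (c) curve would force $c_1+c_2+c_3=0$, both excluded by the no-vanishing-subsum hypothesis, while type (d) requires CM by $\Z[\sqrt{-2}]$. Hence the Kawamata locus $Z_{c_1,c_2,c_3}$ consists exactly of the type (a) curves, all of whose points have some coordinate equal to $e_E$.

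Once this is in place the conclusion is immediate and needs no separate treatment of points with a coordinate equal to $e_E$ (which, contrary to your claim, are not finitely many inside $X_{c_1,c_2,c_3}$ --- they fill the type (a) curves --- but are excluded from the count by the hypothesis $P_i\in\Gamma\setminus\{e_E\}$): every triple being counted lies in $(X_{c_1,c_2,c_3}\setminus Z_{c_1,c_2,c_3})(\C)\cap\Gamma^3$, and Theorem \ref{arithmain} applied to the finite-rank subgroup $\Gamma^3\le E^3(\C)$, of rank $3r$, yields at most $D^{3r+1}\le (D^3)^{r+1}$ such triples, which is the stated bound after renaming the constant. Two further points of confusion in your write-up: Theorem \ref{arithmain} is precisely the count \emph{outside} the Kawamata locus, not a tool for handling the special curves themselves; and the non-CM hypothesis is not used to bound the number of abelian subvarieties of $E^3$ of bounded degree (that is finite in the CM case too), but to force $u^2=1$ in the unit conditions of cases (b) and (c) so that the subsum hypothesis applies.
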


This can be seen as the elliptic curve analogue (for $n=3$) of Theorem 1.1 in \cite{ESS}, where Evertse, Schlickewei, and Schmidt bound the number of non-trivial solutions of equations of the form $c_1x_1+\cdots+c_nx_n=0$ for finite rank subgroups of $(k^*)^n$ with $k$ an algebraically closed field of characteristic zero (see also \cite{EV}).
Both our Corollary \ref{cor1} as well as Theorem 1.1 in \cite{ESS} are instances (for elliptic curves and the multiplicative group $\mathbb{G}_m$ respectively) of the fact that the group operation of an algebraic group is, in some sense, not compatible with additive relations, unless the group itself is additive.

A version of Corollary \ref{cor1} for CM elliptic curves, removing infinitely many \emph{trivial} solutions, is also a consequence of Theorem \ref{arithmain}, as we are able to give a complete and explicit characterization of the trivial solutions of Equation \eqref{eq123}, for any  elliptic curve $E$. 

The method of proof is the following: From Equation \eqref{eq123} one can define a complex surface $X_{c_1,c_2,c_3}\subseteq E^3$ whose points correspond to either triples $(P_1,P_2,P_3)\in E(\mathbb{C})^3$ whose $x$-coordinates solve Equation \eqref{eq123}, or are triples containing $e_E$. From the Uniform Mordell-Lang conjecture, recently proved by Gao, Ge, and K\"uhne  \cite{GGK}, to bound the number of solutions of Equation \eqref{eq123} we need to explicitly find all translates of abelian subvarieties of $E^3$ in $X_{c_1,c_2,c_3}$, which is a problem equivalent to finding the union of all the images of non-constant morphisms $E\to X_{c_1,c_2,c_3}$. This relies on the fact that every proper abelian subvariety of $E^3$ is isogenous to $E$ or $E^2$.

We give a complete classification of these maps in the following result:

\begin{theorem}\label{MainTheorem}
Let $E$ be an elliptic curve defined over $\C$ and let $(c_1,c_2,c_3)$ be a triple of non-zero complex numbers. Then, the images of all the non-constant morphisms $\Phi\colon E\to  X_{c_1,c_2,c_3}\subset E^3$ are classified into the following cases:
\begin{itemize}
    \item [(a)] $\{(P,e_E,e_E)\colon P\in E\}$, and every coordinate permutation of this elliptic curve.
    \item[(b)] $\{(P,[u]P,R)\colon P\in E\}$ with $R\in E$ and $u$ a unit in $\End(E)$, satisfying respectively $x(R)=-\frac{A(c_1+c_2)}{3c_3}$ and $c_1+u^2c_2=0$, and every coordinate permutation of this scenario, 
    \item[(c)] $\{(P,[u]P,[v]P)\colon P\in E\}$ where $u,v$ are units in $\End(E)$ satisfying $c_1+\frac{c_2}{u^2}+\frac{c_3}{v^2}=0$, and
    \item[(d)] $\{([\sqrt{-2}]P,\pm P,\pm P+(a,0))\colon P\in E\}$, whenever $E:y^2=x^3+ax^2-3a^2x+a^3$ for any non-zero $a\in \C^{\times}$ and $c_1=2c_2=2c_3$, and every coordinate permutation of this scenario.
\end{itemize}
Here, we identify $\End(E)$ with a subring of $\C$.
\end{theorem}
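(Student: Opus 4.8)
\noindent\emph{Proof plan.} The plan is to translate the statement into a functional identity for Weierstrass $\wp$-functions on $E$ and to read off the four families from an analysis of when the poles in such an identity can cancel. I would begin with rigidity for abelian varieties: every morphism $E\to E^3$ is, after a translation of the target, a group homomorphism, so a non-constant $\Phi$ has the form $\Phi(P)=([\alpha_1]P+R_1,[\alpha_2]P+R_2,[\alpha_3]P+R_3)$ with $\alpha_i\in\End(E)\subset\C$, $R_i\in E$, and $(\alpha_1,\alpha_2,\alpha_3)\neq 0$. On the dense open subset of $E$ where the three coordinates avoid $e_E$, membership $\Phi(E)\subseteq X_{c_1,c_2,c_3}$ says exactly that $c_1\,x([\alpha_1]P+R_1)+c_2\,x([\alpha_2]P+R_2)+c_3\,x([\alpha_3]P+R_3)=0$.

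Uniformizing $E=\C/\Lambda$, with $x(P)=\wp_\Lambda(z)-A/3$ for $P\leftrightarrow z$ and $[\alpha]\colon z\mapsto\alpha z$, and writing $R_i\leftrightarrow w_i$, this becomes the identity of meromorphic functions
\[
c_1\,\wp_\Lambda(\alpha_1 z+w_1)+c_2\,\wp_\Lambda(\alpha_2 z+w_2)+c_3\,\wp_\Lambda(\alpha_3 z+w_3)=\tfrac{A}{3}(c_1+c_2+c_3).
\]
For $\alpha_i\neq 0$ the $i$-th term has double poles exactly along the coset $\Pi_i:=\{z:\alpha_i z+w_i\in\Lambda\}$ of the lattice $\tfrac1{\alpha_i}\Lambda\supseteq\Lambda$, with principal part $c_i\alpha_i^{-2}(z-z_0)^{-2}$ at each $z_0\in\Pi_i$ (there is no $(z-z_0)^{-1}$ term since $\wp$ is even). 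As the right-hand side is holomorphic, at every point of $\bigcup_i\Pi_i$ the sum of the numbers $c_i\alpha_i^{-2}$ over the indices having a pole there must vanish; this pole bookkeeping, followed by a comparison of constant terms, is the engine of the argument.

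I would then split on $m:=\#\{i:\alpha_i\neq0\}$. If $m=0$, $\Phi$ is constant. If $m=1$, two coordinates of $\Phi$ are constant, and since $X_{c_1,c_2,c_3}$ is the Zariski closure of the affine solution locus --- which contains the ``corner'' curves $\{(P,e_E,e_E)\}$ but none of the coordinate ``walls'' $\{P_i=e_E\}$ away from them --- a short degeneration argument forces those two constants to be $e_E$: family (a). If $m\geq2$, pole cancellation forces the cosets $\Pi_i$ of the non-constant terms to line up. When the non-constant $\alpha_i$ all generate the same lattice $\tfrac1{\alpha_i}\Lambda$, comparing cosets gives $\alpha_j=u_j\alpha_1$ and $R_j=[u_j]R_1$ for units $u_j\in\End(E)^{\times}$; reparametrizing by $Q=[\alpha_1]P+R_1$ (which is onto $E$), the image is $\{(Q,[u_2]Q,[u_3]Q)\}$ if $m=3$ and $\{(Q,[u_2]Q,R_3)\}$ if $m=2$, the pole-coefficient relation becomes the stated linear relation among the $c_i$, and for $m=2$ the constant-term comparison pins $x(R_3)$: families (c) and (b). Otherwise some $\alpha_j$ generates a strictly finer lattice; then its poles are $N(\alpha_j)$-fold denser than those of a ``unit'' term, and since only the two remaining summands are available to cancel them, one is forced into $N(\alpha_j)=2$, so $\End(E)$ contains a norm-two element and $E$ has complex multiplication by $\mathbb{Z}[\sqrt{-2}]$ (or, a priori, by $\mathbb{Z}[i]$). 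Inserting the degree-two descent formula $\wp_\Lambda(\alpha_j z)=\alpha_j^{-2}\bigl(\wp_\Lambda(z)+\wp_\Lambda(z+t)-\wp_\Lambda(t)\bigr)$, where $t$ spans $\ker[\alpha_j]=\tfrac1{\alpha_j}\Lambda/\Lambda$, a principal-part comparison forces the companion endomorphisms to be units (hence $\pm1$ after renaming), the translate to be $t$, and $c_1=2c_2=2c_3$; a final constant-term comparison determines $\wp_\Lambda(t)$, hence the $x$-coordinate of the kernel point, hence --- together with the CM relation --- the model $y^2=x^3+ax^2-3a^2x+a^3$ of family (d), the $\mathbb{Z}[i]$ alternative being disposed of in the same step. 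Coordinate permutations, and the evenness of $\wp$ (the source of the $\pm$ signs), are carried along throughout.

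The step I expect to be the main obstacle is this last, non-unit branch: getting the descent-formula bookkeeping exactly right --- that the two companion isogenies are $\pm\mathrm{id}$ and that the translate is precisely the kernel $2$-torsion point --- verifying that the enumeration of norm-two situations is exhaustive, and then using the constant-term comparison both to recognize the Weierstrass model of family (d) and to settle the competing norm-two order. A more routine but still delicate point is the treatment of the $e_E$-degeneracies, which requires a sufficiently precise description of $X_{c_1,c_2,c_3}$ and its irreducible components to license the $m\leq1$ analysis and to guarantee that, for $m\geq2$, no image enters $X_{c_1,c_2,c_3}$ through its points at infinity.
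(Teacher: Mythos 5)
Your overall strategy is the same as the paper's (uniformize, reduce membership in $X_{c_1,c_2,c_3}$ to the identity $\sum_i c_i\wp(\alpha_i z+w_i)=\text{const}$, and classify by pole bookkeeping), and your treatment of the degenerate components and of families (a), (b), (c) matches the paper's in substance. The gaps are in the remaining branch, and they are exactly at the decisive steps.

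First, the passage ``otherwise some $\alpha_j$ generates a strictly finer lattice \dots\ one is forced into $N(\alpha_j)=2$'' is asserted, not proved, and your pole-density heuristic does not deliver it. Writing $L_i=\tfrac1{\alpha_i}\Lambda$ and $\Pi_i$ for the pole cosets, cancellation only gives $\Pi_i\subseteq\Pi_j\cup\Pi_k$, hence $1\le [L_i:L_i\cap L_j]^{-1}+[L_i:L_i\cap L_k]^{-1}$. Besides the configuration you allow (two lattices equal up to a unit, one denser of index $2$), this inequality also permits the configuration $L_2\subsetneq L_1$, $L_2\subsetneq L_3$ with two non-associate norm-$2$ endomorphisms (which forces $\End(E)\cong\Z[\frac{1+\sqrt{-7}}{2}]$), and the configuration in which all three pairwise intersections have index $2$. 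Ruling these out is the technical heart of the paper's proof: a lemma on a group covered by a subgroup and a coset of a subgroup, the classification of norm-$2$ and norm-$4$ elements of imaginary quadratic orders, and, for the $\sqrt{-7}$ configuration, a counting argument on the parameter values at which the coordinates hit $e_E$ (using that whenever one coordinate is $e_E$ a second one must be $e_E$ simultaneously). None of this appears in your plan, and your own enumeration of norm-$2$ situations (``$\Z[\sqrt{-2}]$ or, a priori, $\Z[i]$'') already omits $\Z[\frac{1+\sqrt{-7}}{2}]$.

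Second, the endgame cannot be carried out as described. In the configuration $(\lambda,u_2,u_3)$ with $N(\lambda)=2$, the principal-part relations only give $c_2/u_2^2=c_3/u_3^2=-c_1/\lambda^2$ together with one condition on the $x$-coordinate of the kernel point of $\lambda$; since the $c_i$ are arbitrary nonzero complex numbers, nothing in this bookkeeping forces $\lambda=\pm\sqrt{-2}$. Concretely, on $E\colon y^2=x^3+Bx$ one checks that $2x([1+i]P)+i\,x(P)+i\,x(P+(0,0))=0$ holds identically, so the curve $\{([1+i]P,\,P,\,P+(0,0))\colon P\in E\}$ lies in $X_{2,i,i}$: the $\Z[i]$ alternative is not ``disposed of in the same step'' by a constant-term comparison — it genuinely occurs for coefficient triples with $c_1/c_2\notin\R$, and an analogous family exists for $\Z[\frac{1+\sqrt{-7}}{2}]$. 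So at this point your plan needs either a hypothesis on $(c_1,c_2,c_3)$ or extra families in the conclusion; note that the paper's own argument is at its briefest precisely here, where it deduces $\lambda^2\in\Q$ from the $z^{-2}$ coefficient, so this step deserves particular care rather than the one-line disposal you propose.
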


Applying Uniform Mordell-Lang, we obtain a bound for the number of points of a finite rank group outside the Kawamata locus, which we denote by $Z_{c_1,c_2,c_3}$. The result is the following:

\begin{theorem}\label{arithmain}
There exists an absolute constant $D$ such that for any elliptic curve $E/\C$, triple $(c_1,c_2,c_3)$ of non-zero complex numbers, and any finite rank subgroup $\Gamma$ of $E^3(\C)$ the following holds: 
$$\# (X_{c_1,c_2,c_3}\setminus Z_{c_1,c_2,c_3})(\C)\cap\Gamma\leq D^{\mathrm{rk}(\Gamma)+1}.$$
\end{theorem}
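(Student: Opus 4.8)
The plan is to deduce Theorem~\ref{arithmain} from the classification in Theorem~\ref{MainTheorem} together with the Uniform Mordell--Lang theorem of Gao--Ge--K\"uhne. First I would recall the precise statement I intend to invoke: for a subvariety $V$ of an abelian variety $A$ over $\C$, if $V^*$ denotes the complement in $V$ of the union of all positive-dimensional translated abelian subvarieties of $A$ contained in $V$ (the \emph{Kawamata}, or \emph{special}, locus), then for every finite rank subgroup $\Gamma < A(\C)$ one has $\#\big(V^*(\C)\cap\Gamma\big) \le c(A,V)^{\mathrm{rk}(\Gamma)+1}$ for a constant $c(A,V)$ depending only on $A$ and $V$. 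Applying this with $A=E^3$ and $V=X_{c_1,c_2,c_3}$ gives exactly the shape of the bound we want, with $Z_{c_1,c_2,c_3}$ being the Kawamata locus, \emph{provided} the constant can be taken independent of $E$ and of $(c_1,c_2,c_3)$. So the entire content of the theorem beyond a direct citation is the \emph{uniformity} of the constant, and that is where the classification does its work.

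Next I would explain why the constant is uniform. The dependence of $c(A,V)$ in the Gao--Ge--K\"uhne bound can be controlled in terms of the dimension of $A$, the degree of $V$ with respect to a fixed ample class, and the number and degrees of the translated abelian subvarieties occurring in $V$; none of these involves the fine moduli of $A$. Here $A=E^3$ has fixed dimension $3$ throughout; $X_{c_1,c_2,c_3}$ is cut out by a single equation of bounded degree coming from \eqref{eq123} via the group law on $E$, so its degree with respect to the natural polarization is bounded by an absolute constant; and Theorem~\ref{MainTheorem} shows that the positive-dimensional translated abelian subvarieties inside $X_{c_1,c_2,c_3}$ come from an explicit finite list of families — cases (a)--(d) and their coordinate permutations — each a translate of an elliptic curve in $E^3$ of absolutely bounded degree, and there are only finitely many of them, in number bounded independently of $E$ and $(c_1,c_2,c_3)$. (One should note that the torsion translates that fill out each family and any isolated translated points are absorbed into $Z_{c_1,c_2,c_3}$ by definition, so they do not affect the count on the complement.) Feeding these uniform bounds into the explicit form of the Gao--Ge--K\"uhne estimate yields an absolute $D$ as claimed.

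Concretely, the steps in order are: (1) state the quantitative Uniform Mordell--Lang bound in the form needed, tracking the dependence of the constant on the invariants of the pair $(A,V)$; (2) verify that $X_{c_1,c_2,c_3}\subset E^3$ is a hypersurface of absolutely bounded degree, uniformly in $E$ and $(c_1,c_2,c_3)$, and that $\dim E^3=3$ is fixed; (3) invoke Theorem~\ref{MainTheorem} to identify $Z_{c_1,c_2,c_3}$ with the special locus and to bound, uniformly, the number and degrees of the translated abelian subvarieties contributing to it, using that every proper abelian subvariety of $E^3$ is isogenous to $E$ or $E^2$ and that the relevant maps $E\to X_{c_1,c_2,c_3}$ are exactly those listed; (4) combine (1)--(3) to produce the absolute constant $D$.

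The main obstacle I expect is step~(1) together with the uniformity bookkeeping in step~(3): the Uniform Mordell--Lang theorem as usually cited gives a constant ``depending on $A$ and $V$'' without the user-friendly explicit dependence one needs here, so one must either extract from \cite{GGK} (and its quantitative antecedents) the precise way the constant scales with $\dim A$, $\deg V$, and the combinatorics of the special locus, or reduce to a bounded family of pairs $(E^3, X_{c_1,c_2,c_3})$ — for instance by a specialization/constructibility argument showing the relevant data vary in an algebraic family over a base of bounded complexity — so that the constant is automatically uniform. Once the dependence of the constant is pinned down, the remaining verifications (degree of the hypersurface, finiteness and boundedness of the list from Theorem~\ref{MainTheorem}) are routine.
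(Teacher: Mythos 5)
Your overall strategy is the paper's: fix $\dim E^3=3$, bound the degree of $X_{c_1,c_2,c_3}$ with respect to a natural polarization by an absolute constant, and feed this into Gao--Ge--K\"uhne. But two points of bookkeeping differ, and they matter for how much work is actually left. First, the ``main obstacle'' you flag---extracting from \cite{GGK} how the constant depends on the data---is already resolved by the form of their result that the paper quotes (Theorem 1.1$''$, restated as Theorem \ref{ggk}): the constant is $c(g,\deg_L X)$, depending \emph{only} on the dimension $g$ of the ambient abelian variety and on $\deg_L X$ for an ample $L$, with no dependence on the moduli of $A$ or on the combinatorics of the special locus. So no specialization or bounded-family argument is needed; the paper simply takes the explicit symmetric ample sheaf $\mathcal{L}=\mathcal{O}(\pi_1^*e_E+\pi_2^*e_E+\pi_3^*e_E)$ on $E^3$, checks $\deg_{\mathcal{L}}X_{c_1,c_2,c_3}\leq 96$ (as in \cite{GFP}), and sets $D=3\max_{1\leq i\leq 96}c(3,i)$. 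Second, and relatedly, your step (3) is not needed for Theorem \ref{arithmain} at all: since $Z_{c_1,c_2,c_3}$ is by definition the Kawamata locus and the count is taken on its complement, the bound follows from Theorem \ref{ggk} without knowing what $Z_{c_1,c_2,c_3}$ is, and in particular without any bound on the number or degrees of the translated elliptic curves from Theorem \ref{MainTheorem}. The classification is what makes the theorem \emph{usable}---it identifies $Z_{c_1,c_2,c_3}$ explicitly, which is exactly what Corollary \ref{cor1} needs (for non-CM $E$ and no vanishing subsum, only the type (a) curves occur, so all triples with nonzero coordinates solving the linear relation lie outside $Z_{c_1,c_2,c_3}$)---but it plays no role in producing the constant $D$. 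With these adjustments your plan collapses to the paper's short argument.
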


When $E$ and $\Gamma$ are defined over $\Q^{alg}$, one can also give effective bounds, depending on $\mathrm{rk}(\Gamma)$, $\mathrm{deg}_LX_{c_1,c_2,c_3}$ with $L$ a symmetric ample sheaf on $E^3$, and the Faltings height of $E^3$, by Th\'eor\`eme 1.3 in R\'emond's work \cite{Rem00}.

\subsection*{Acknowledgments}
We would like to thank Hector Pasten for fruitful discussions.

J.C. was supported by a Simons Foundation grant (Grant 550023). N. G.-F. was supported by ANID Fondecyt regular grant 1211004.

\section{Finding the Kawamata locus of $X_{c_1,c_2,c_3}$}

\subsection{Geometry of $X_{c_1,c_2,c_3}$ and Theorem \ref{MainTheorem} case (a)} Given $(c_1,c_2,c_3)\in(\mathbb{C}^\times)^3$, consider the surface $Y_{c_1,c_2,c_3}=\mathbb{V}(c_1x_1+c_2x_2+c_3x_3)\subseteq\mathbb{A}^3$. We can view $Y_{c_1,c_2,c_3}$ as a quasiprojective variety in $(\mathbb{P}^1)^3$ via the identification 
\begin{eqnarray*}
\mathbb{A}^3 &\to&(\mathbb{P}^1)^3\cr
(x_1,x_2,x_3) &\mapsto& ([x_1:1],[x_2:1],[x_3:1]),
\end{eqnarray*}
and we denote by $\bar{Y}_{c_1,c_2,c_3}\subseteq(\mathbb{P}^1)^3$ the Zariski closure of the image of $Y_{c_1,c_2,c_3}$.

Let $E/\C$ be the elliptic curve defined by the Weierstrass equation $y^2=x^3+Ax^2+Bx+C$, and denote its point at infinity $[0:1:0]$ by $e_E$. Consider the $x$-coordinate map 
\begin{eqnarray*}
E &\to&\mathbb{P}^1\cr
[x:y:1] &\mapsto& [x:1]\cr
[0:1:0] &\mapsto& [1:0],
\end{eqnarray*}
which induces a natural map $F\colon E^3\to(\mathbb{P}^1)^3$. Let $X_{c_1,c_2,c_3}:=F^{-1}(\bar{Y}_{c_1,c_2,c_3})\subseteq E^3$. As in Subsection 5.3 of \cite{GFP}, one can directly prove that
\begin{lemma}
The set $X_{c_1,c_2,c_3}\subseteq E^3$ is a projective surface, and $F_{|X_{c_1,c_2,c_3}}\colon X_{c_1,c_2,c_3}\to(\mathbb{P}^1)^3$ is a surjective, flat, and finite morphism of degree $8$.
\end{lemma}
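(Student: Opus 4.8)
The plan is to exhibit $F$ as the threefold product of the $x$-coordinate map $\pi\colon E\to\P^1$, to show that $F$ is finite and flat of degree $8$, and then to obtain every assertion about $X_{c_1,c_2,c_3}$ by base change along the closed immersion $\bar{Y}_{c_1,c_2,c_3}\hookrightarrow(\P^1)^3$; the remark ``as in Subsection 5.3 of \cite{GFP}'' signals that this bookkeeping has already been carried out in a parallel situation.

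First I would treat $\pi$. It is a non-constant morphism of smooth projective curves, hence finite; $\pi_*\OO_E$ is a coherent torsion-free sheaf on the smooth curve $\P^1$, hence locally free, so $\pi$ is flat; and its degree is $2$, since the generic fibre is an orbit of the hyperelliptic involution $[x:y:1]\mapsto[x:-y:1]$ (equivalently $[\C(E):\C(x)]=2$). A product of finite flat morphisms, over the product of the bases, is again finite and flat with relative rank the product of the ranks, so $F=\pi\times\pi\times\pi\colon E^3\to(\P^1)^3$ is finite and flat of degree $2^3=8$; having positive constant relative rank, $F$ is faithfully flat, in particular surjective.

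Next I would describe the target and conclude. The affine variety $Y_{c_1,c_2,c_3}$ is a hyperplane in $\A^3$, so it is irreducible of dimension $2$; since $\A^3\hookrightarrow(\P^1)^3$ is a dense open immersion, its closure $\bar{Y}_{c_1,c_2,c_3}$ is an irreducible projective surface (no lower-dimensional components can appear at the boundary, as it is by construction the closure of the single irreducible variety $Y_{c_1,c_2,c_3}$), and in fact $\bar{Y}_{c_1,c_2,c_3}$ is the vanishing divisor of a nonzero section of $\OO(1,1,1)$ on the smooth threefold $(\P^1)^3$, hence Cohen--Macaulay. Scheme-theoretically $X_{c_1,c_2,c_3}=F^{-1}(\bar{Y}_{c_1,c_2,c_3})$ is the fibre product $\bar{Y}_{c_1,c_2,c_3}\times_{(\P^1)^3}E^3$, so the induced morphism $X_{c_1,c_2,c_3}\to\bar{Y}_{c_1,c_2,c_3}$ is the base change of $F$, and is therefore finite, flat of degree $8$, and again (positive constant rank) surjective --- this is the content of the displayed claim, the surjectivity being onto $\bar{Y}_{c_1,c_2,c_3}$, which is the image of $F_{|X_{c_1,c_2,c_3}}$. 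Finiteness and surjectivity give $\dim X_{c_1,c_2,c_3}=\dim\bar{Y}_{c_1,c_2,c_3}=2$; flatness over the integral base $\bar{Y}_{c_1,c_2,c_3}$ forces every associated point of $X_{c_1,c_2,c_3}$ to lie over the generic point, so every irreducible component dominates $\bar{Y}_{c_1,c_2,c_3}$ and hence has dimension $2$; and $X_{c_1,c_2,c_3}$, being a closed subscheme of the projective variety $E^3$, is projective. Thus $X_{c_1,c_2,c_3}$ is an equidimensional projective scheme of dimension $2$. If one also wants it reduced: it is Cohen--Macaulay, being finite flat over the Cohen--Macaulay base $\bar{Y}_{c_1,c_2,c_3}$ with Artinian fibres, and it is generically reduced because $F$ is \'etale over a dense open of $\bar{Y}_{c_1,c_2,c_3}$ (the branch locus of $F$ is a finite union of ``coordinate fibres'' $\{x_i=\text{const}\}$, and $\bar{Y}_{c_1,c_2,c_3}$ lies in none of them since all $c_j\neq0$); Cohen--Macaulay together with generically reduced yields reduced.

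There is essentially no deep obstacle here: the substantive inputs are just ``miracle flatness'' (torsion-free over a smooth curve $\Rightarrow$ locally free) for $\pi$, and the stability of ``finite'', ``flat'', ``degree $8$'' and ``surjective'' under base change. The one place that deserves a sentence of care is the passage to the projective closure --- one must check that taking closure in $(\P^1)^3$ neither lowers the dimension of $Y_{c_1,c_2,c_3}$ nor causes $\bar{Y}_{c_1,c_2,c_3}$ to be contained in, or to contain a component of, the branch locus of $F$; both are immediate from the explicit equation $c_1x_1+c_2x_2+c_3x_3=0$ with all $c_i\neq0$.
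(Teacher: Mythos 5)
Your proof is correct, and it follows the same route the paper intends: the paper gives no in-text argument but defers to Subsection 5.3 of \cite{GFP}, where the analogous statement is obtained exactly as you do, by viewing $F$ as the threefold product of the degree-$2$ $x$-coordinate map (finite and flat by miracle flatness) and base-changing along $\bar{Y}_{c_1,c_2,c_3}\hookrightarrow(\P^1)^3$. Your reading of ``surjective'' as surjectivity onto the image $\bar{Y}_{c_1,c_2,c_3}$, and the extra care about equidimensionality and reducedness of $X_{c_1,c_2,c_3}$, are consistent with (indeed slightly more detailed than) what the cited argument provides.
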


The following lemma is also easy to verify:

\begin{lemma}
Let $a,b,c\in\mathbb{C}$. We have that $a,b,c$ satisfy $c_1a+c_2b+c_3c=0$ if and only if $(a,b,c)\in Y_{c_1,c_2,c_3}$. 

Let $P_1,P_2,P_3\in E(\mathbb{C})$, none of them equal to $e_E$. We have that $x(P_1),x(P_2),x(P_3)$ satisfy $$c_1x(P_1)+c_2x(P_2)+c_3x(P_3)=0$$ if and only if $(P_1,P_2,P_3)\in X_{c_1,c_2,c_3}\cap (E\setminus\{e_E\})^3$.
\end{lemma}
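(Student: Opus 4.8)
The plan is to observe that both equivalences are unwindings of the relevant definitions, with the only subtlety being the passage between the affine surface $Y_{c_1,c_2,c_3}$ and its projective closure $\bar{Y}_{c_1,c_2,c_3}$.

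For the first claim there is nothing to prove: by construction $Y_{c_1,c_2,c_3}=\mathbb{V}(c_1x_1+c_2x_2+c_3x_3)$ is the zero locus in $\mathbb{A}^3$ of the single polynomial $c_1x_1+c_2x_2+c_3x_3$, so a point $(a,b,c)\in\mathbb{A}^3$ lies in $Y_{c_1,c_2,c_3}$ precisely when $c_1a+c_2b+c_3c=0$.

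For the second claim I would first record the topological identity $\bar{Y}_{c_1,c_2,c_3}\cap\mathbb{A}^3=Y_{c_1,c_2,c_3}$, where $\mathbb{A}^3$ sits inside $(\mathbb{P}^1)^3$ via the chosen embedding: $\bar{Y}_{c_1,c_2,c_3}$ is the Zariski closure in $(\mathbb{P}^1)^3$ of a subset of the open set $\mathbb{A}^3$, so its intersection with $\mathbb{A}^3$ is the closure of $Y_{c_1,c_2,c_3}$ in the subspace topology of $\mathbb{A}^3$, and since $Y_{c_1,c_2,c_3}$ is already Zariski closed in $\mathbb{A}^3$ this closure is $Y_{c_1,c_2,c_3}$ itself. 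Now take $P_1,P_2,P_3\in E(\mathbb{C})$ with none equal to $e_E$. Since $F$ is induced coordinatewise by the $x$-coordinate map and each $P_i\neq e_E$, we have $F(P_1,P_2,P_3)=([x(P_1):1],[x(P_2):1],[x(P_3):1])$, which lies in the affine chart $\mathbb{A}^3$ and corresponds to the point $(x(P_1),x(P_2),x(P_3))$. By definition $X_{c_1,c_2,c_3}=F^{-1}(\bar{Y}_{c_1,c_2,c_3})$, so $(P_1,P_2,P_3)\in X_{c_1,c_2,c_3}$ iff $F(P_1,P_2,P_3)\in\bar{Y}_{c_1,c_2,c_3}$; since $F(P_1,P_2,P_3)\in\mathbb{A}^3$, the identity $\bar{Y}_{c_1,c_2,c_3}\cap\mathbb{A}^3=Y_{c_1,c_2,c_3}$ makes this equivalent to $(x(P_1),x(P_2),x(P_3))\in Y_{c_1,c_2,c_3}$, which by the first claim means $c_1x(P_1)+c_2x(P_2)+c_3x(P_3)=0$. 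As the $P_i$ were taken in $E\setminus\{e_E\}$, this is exactly the stated equivalence with membership in $X_{c_1,c_2,c_3}\cap(E\setminus\{e_E\})^3$.

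The only point deserving explicit mention — and the reason the second equivalence is phrased over $(E\setminus\{e_E\})^3$ rather than $E^3$ — is that if some $P_i=e_E$, then the $x$-coordinate map sends it to the point at infinity $[1:0]\in\mathbb{P}^1$, so $F(P_1,P_2,P_3)$ leaves the affine chart and the clean translation into Equation \eqref{eq123} no longer applies; this is precisely why $X_{c_1,c_2,c_3}$ carries spurious components with a coordinate equal to $e_E$, to be removed later via the Kawamata locus. Beyond this bookkeeping there is no essential difficulty.
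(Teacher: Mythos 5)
Your proof is correct and is exactly the routine verification the paper omits (the lemma is stated there as ``easy to verify''): the first claim is the definition of $Y_{c_1,c_2,c_3}$, and the second follows from $\bar{Y}_{c_1,c_2,c_3}\cap\mathbb{A}^3=Y_{c_1,c_2,c_3}$ (since $Y_{c_1,c_2,c_3}$ is already closed in the affine chart) together with the fact that $F$ sends a triple of points distinct from $e_E$ into that chart. Nothing further is needed.
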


Let $\Phi\colon E\to X_{c_1,c_2,c_3}\subset E^3$ be a non-constant morphism. Considering the previous Lemma, we first study $\mathrm{Im}(\Phi)$ when $\Phi$ is identically $e_E$ in at least one component.

\begin{lemma}\label{Lemma}
Let $c_1,c_2,c_3$ be non-zero complex numbers and let $\Phi\colon E\to X_{c_1,c_2,c_3}\subseteq E^3$ be a non-constant morphism of varieties that is identically $e_E$ in at least one component. Then $\Phi(E)=E\times\{e_E\}\times \{e_E\}$ or some coordinate permutation of this elliptic curve.

\end{lemma}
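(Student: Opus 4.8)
The plan is to analyze a non-constant morphism $\Phi = (\Phi_1,\Phi_2,\Phi_3)\colon E\to X_{c_1,c_2,c_3}\subseteq E^3$ under the hypothesis that at least one component, say $\Phi_3$, is identically $e_E$. After reordering coordinates we may assume $\Phi_3\equiv e_E$. The key point is that the defining equation of $X_{c_1,c_2,c_3}$ comes from $\bar Y_{c_1,c_2,c_3}\subseteq(\mathbb{P}^1)^3$, and in the chart where the third $\mathbb{P}^1$-coordinate is the point at infinity $[1:0]$ (the image of $e_E$ under the $x$-coordinate map), the closure $\bar Y_{c_1,c_2,c_3}$ is cut out by the homogenized equation; setting the third coordinate to $\infty$ forces $c_1x_1+c_2x_2+c_3x_3=0$ to degenerate. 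Concretely, I would write the line $c_1X + c_2Y = 0$ in the $(x_1,x_2)$-plane as the fiber of $\bar Y_{c_1,c_2,c_3}$ over $[1:0]$ in the third factor, so that $F\circ\Phi$ lands in $\{([x_1:1],[x_2:1],[1:0])\} \cup \{x_1=\infty\}\cup\{x_2=\infty\}$ subject to that constraint.

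First I would dispose of the possibility that a second component, say $\Phi_2$, is also identically $e_E$: then $F\circ\Phi$ maps $E$ into $\{[x_1:1]\}\times\{[1:0]\}\times\{[1:0]\}$, and since $\bar Y$ passes through $([x_1:1],[1:0],[1:0])$ with no constraint on $x_1$ (the equation $c_1x_1 = 0$ degenerates at the two points at infinity), $\Phi_1$ can be anything; but $\Phi_1$ is a non-constant morphism $E\to E$, hence surjective, giving $\Phi(E) = E\times\{e_E\}\times\{e_E\}$, which is the desired conclusion. So I may assume $\Phi_1$ and $\Phi_2$ are both non-constant, hence surjective, morphisms $E\to E$. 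Now the curve $\Phi(E)\subseteq E\times E\times\{e_E\}$ projects dominantly to each of the first two $E$-factors, and its image under $F$ (restricted to those two factors) lies in the closure in $(\mathbb{P}^1)^2$ of the affine line $c_1x_1+c_2x_2 = 0$. This means: for all $P\in E$ with $\Phi_1(P),\Phi_2(P)\neq e_E$ we have $c_1 x(\Phi_1(P)) + c_2 x(\Phi_2(P)) = 0$, i.e. $x\circ\Phi_2 = -(c_1/c_2)\, x\circ\Phi_1$ as rational functions on $E$.

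The hard part will be deriving a contradiction from this $x$-coordinate identity, and for this I would pull back to the analytic uniformization $E\cong\C/\Lambda$ and work with the Weierstrass $\wp$-function. Since $\Phi_i$ are morphisms of elliptic curves they are of the form $z\mapsto [n_i]z + t_i$ for integers (or endomorphisms) $n_i$ and translations $t_i$; up to the known translation/isogeny structure, $x\circ\Phi_i$ becomes $\wp(n_i z + s_i)$ for suitable $s_i$. The relation $\wp(n_2 z + s_2) = -(c_1/c_2)\wp(n_1 z + s_1)$ forces, by comparing the orders and locations of poles of these elliptic functions, that $n_1 = \pm n_2$ and the pole lattices coincide, and then comparing the leading Laurent coefficients (both $\wp$'s have the same leading coefficient $1/(n_i z+s_i)^2$) forces $-c_1/c_2 = 1$ up to a unit square, which by the no-vanishing-subsum hypothesis on the $c_i$ cannot hold — or, more precisely, it lands us in a translate that is genuinely of the form in case (b)/(c) of Theorem \ref{MainTheorem}, \emph{not} in the present "two-or-more-$e_E$-components" situation, contradicting $\Phi_3\equiv e_E$ with a genuine curve. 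I expect the cleanest route is: an $x$-coordinate identity $x\circ\Phi_2 = \lambda\, (x\circ\Phi_1)$ between two non-constant maps $E\to E$ is impossible unless $\Phi_1,\Phi_2$ differ by a sign and a $2$-torsion translation and $\lambda$ is forced, and then chase through the resulting point $([x_1:1],[x_2:1],[1:0])$ must actually fail to lie on $\bar Y$ unless $\Phi$ were constant. This pole-counting/leading-coefficient comparison for $\wp$ is the technical crux; everything else is bookkeeping with the charts of $(\mathbb{P}^1)^3$ and the surjectivity of non-constant endomorphisms of $E$.
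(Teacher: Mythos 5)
The crux of this lemma is the description of the points of $\bar{Y}_{c_1,c_2,c_3}$ lying over $\infty$ in one factor, and your description is wrong. Multihomogenizing, $\bar{Y}_{c_1,c_2,c_3}$ is cut out in $(\P^1)^3$ by $c_1u_1v_2v_3+c_2v_1u_2v_3+c_3v_1v_2u_3=0$, where $x_i=u_i/v_i$; setting $v_3=0$ forces $c_3v_1v_2u_3=0$, hence $v_1v_2=0$. So every point of $\bar{Y}_{c_1,c_2,c_3}$ with third coordinate $\infty$ has its first or second coordinate equal to $\infty$ as well: the fiber over $[1:0]$ in the third factor is $\bigl(\{\infty\}\times\P^1\cup\P^1\times\{\infty\}\bigr)\times\{\infty\}$, \emph{not} the line $c_1x_1+c_2x_2=0$. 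A point $([x_1:1],[x_2:1],[1:0])$ with $x_1,x_2$ finite never lies on $\bar{Y}_{c_1,c_2,c_3}$ (if $x_1,x_2$ stay bounded and $x_3\to\infty$, then $c_1x_1+c_2x_2+c_3x_3\to\infty$). This one observation finishes the lemma, and is the paper's proof: if $\Phi_3\equiv e_E$ while $\Phi_1,\Phi_2$ are both non-constant, a generic $P$ gives a point of $\bar{Y}_{c_1,c_2,c_3}$ whose only infinite coordinate is the third one, a contradiction; hence a second component must be identically $e_E$, and the remaining non-constant component is surjective, so $\Phi(E)$ is a coordinate line as claimed.

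Because of the incorrect fiber description, the entire ``hard part'' you plan --- deriving $c_1\,x\circ\Phi_1+c_2\,x\circ\Phi_2=0$ and refuting it by pole and leading-coefficient analysis of $\wp$ --- is misdirected, and as sketched it would not close the case anyway. Such $x$-coordinate identities between two non-constant maps genuinely exist: on $y^2=x^3-x$ one has $x([i]P)=-x(P)$, so with $c_1=c_2$ the map $P\mapsto(P,[i]P,e_E)$ satisfies your displayed relation; it fails to land in $X_{c_1,c_2,c_3}$ only because of the correct description of $\bar{Y}_{c_1,c_2,c_3}$ at infinity, not because the identity is impossible. Relatedly, you invoke a ``no-vanishing-subsum hypothesis on the $c_i$,'' which is a hypothesis of Corollary \ref{cor1}, not of this lemma: the lemma must hold for arbitrary nonzero $c_1,c_2,c_3$, and indeed nontrivial relations of type (b)/(c) of Theorem \ref{MainTheorem} do occur for special $(c_1,c_2,c_3)$. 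Your closing remark that the point $([x_1:1],[x_2:1],[1:0])$ ``must actually fail to lie on $\bar Y$'' is the correct statement, but it flatly contradicts your opening claim that the fiber is the line $c_1X+c_2Y=0$; as written the argument is internally inconsistent, and the key step --- the computation at infinity above --- is missing.
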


\begin{proof}
Let us denote by $x_i$ and $y_i$ the local coordinates of the $i$-th component of $(\P^1)^3$ and consider the open set $\prod_{i=1}^3\{x_i\neq 0\}$ in $(\P^1)^3$. In this open set, Equation \eqref{eq123} turns into 
$$
c_1y_2y_3+c_2y_1y_3+c_3y_1y_2=0.
$$
Consequently, if a point $(z_1,z_2,z_3) \in \bar{Y}_{c_1,c_2,c_3}$ has some coordinate $z_j=\infty=[1:0]\in \P^1$, then another coordinate is also equal to $\infty$ and the remaining coordinate is free. This implies that $\bar{Y}_{c_1,c_2,c_3}\setminus Y_{c_1,c_2,c_3}$ consists of three lines: $\P^1\times \{\infty\}\times \{\infty\}$ and every coordinate permutation of this line.

As we assume that $\Phi$ is identically $e_E$ in one component, we have that $F(\Phi(E))\subset\bar{Y}_{c_1,c_2,c_3}\setminus Y_{c_1,c_2,c_3}$. Since $\Phi$ is non-constant, $F(\Phi(E))$ must be equal to $\P^1\times \{\infty\}\times \{\infty\}$ or some coordinate permutation of this line. Finally, by the definition of $F$ we obtain that $\Phi(E)=E\times\{e_E\}\times \{e_E\}$ or some coordinate permutation of this elliptic curve.
\end{proof}

\subsection{Theorem \ref{MainTheorem} case (b)}

In the following, we fix a triple $(c_1,c_2,c_3)$ of non-zero complex numbers and a non-constant morphism of varieties $\Phi\colon E\to X_{c_1,c_2,c_3}\subseteq E^3$, where none of the components of $\Phi$ is identically $e_E$. 
The change of variables $y\mapsto y/2$ and $x\mapsto x-A/3$ turns Equation \eqref{Weierstrass} into
\begin{equation}\label{Weierstrass2}
y^2=4x^3+bx+c    
\end{equation}
for some $b,c\in \C$. Therefore, the $x$-coordinate of $E$ is locally given by $\wp(z)+A/3$, where $\wp$ is the Weierstrass elliptic function associated to Equation \eqref{Weierstrass2}. Let $\Lambda=\langle 1,\tau\rangle$ be the lattice associated to $E$.

Taking covering spaces, we have the following commutative diagram:
\[
\xymatrixcolsep{9pc}\xymatrix{
\C\ar[d]\ar[r]^{(\alpha_i z+\beta_i)_i} & \C^3 \ar[d] \ar^{(\wp+\frac{A}{3})_i}[dr] & \\
E \ar[r]_{\Phi} & E^3\ar_{(x_1,x_2,x_3)}[r] & (\P^1)^3},
\]
where $\alpha_i\in\End(E)$. Thus, we are looking for $\alpha_i,\beta_i$ such that the following equation holds:
\begin{equation}\label{eq1}
c_1\wp(\alpha_1 z+c_2\beta_1)+c_2\wp(\alpha_2 z+c_2\beta_2)+c_3\wp(\alpha_3 z+\beta_3)=-\frac{A}{3}(c_1+c_2+c_3).
\end{equation}

Note that for a non-constant $\Phi$ it is not feasible that only one $\alpha_i$ be non-zero. If that were the case, we would have that $\wp(\alpha_iz+\beta_i)$ is constant, which implies that $\alpha_i=0$. 
Hence, it is enough to study Equation \eqref{eq1} in two different cases: (i) one $\alpha_i$ is zero, and (ii) every $\alpha_i$ is nonzero. Case (i) is covered by the following:

\begin{lemma}\label{Lemma 1}
Let $\Phi\colon E\to E^3$ be a non-constant morphism with no component being identically $e_E$ such that $\Phi(E)\subset X_{c_1,c_2,c_3}$ and $\Phi$ is constant in one component. Then, there exist $R\in E$ satisfying $x(R)=-\frac{A(c_1+c_2)}{3c_3}$ and $u\in \End(E)^{\times}$ satisfying $c_1+u^2c_2=0$ such that 
$\Phi(E)=\{(P,[u] P,R)\colon P\in E\}$,
or a coordinate permutation of this scenario.
\end{lemma}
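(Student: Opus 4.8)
The plan is to handle the case where exactly one $\alpha_i$ vanishes, say after relabeling $\alpha_3 = 0$ (the other cases being obtained by coordinate permutation). Then $\wp(\alpha_3 z + \beta_3) = \wp(\beta_3)$ is a constant, which I will call a value $x(R) - A/3$ for the appropriate point $R \in E$ (we may take $\beta_3$ so that $\wp(\beta_3)$ equals the $x$-coordinate of a genuine point; since the third component of $\Phi$ is not identically $e_E$, this $R$ is a well-defined point of $E$). Equation \eqref{eq1} then reads
\begin{equation*}
c_1\wp(\alpha_1 z + \beta_1) + c_2\wp(\alpha_2 z + \beta_2) = -\tfrac{A}{3}(c_1+c_2+c_3) - c_3\bigl(x(R) - \tfrac{A}{3}\bigr).
\end{equation*}
Since $\Phi$ is non-constant and $\alpha_3 = 0$, by the remark preceding the lemma neither $\alpha_1$ nor $\alpha_2$ can be zero, so both $\wp(\alpha_1 z + \beta_1)$ and $\wp(\alpha_2 z + \beta_2)$ are genuinely elliptic (non-constant) functions of $z$.

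The key step is then the analytic claim: if $c_1\wp(\alpha_1 z + \beta_1) + c_2\wp(\alpha_2 z + \beta_2)$ is constant, with $\alpha_1, \alpha_2$ nonzero endomorphisms and $c_1, c_2 \neq 0$, then the two functions must have the same set of poles (counted with position in $\C/\Lambda$). Indeed, $\wp(\alpha_1 z + \beta_1)$ has a double pole precisely at the points $z$ with $\alpha_1 z + \beta_1 \in \Lambda$, and similarly for the second term; for the sum to be holomorphic (constant), every double pole of the first term must be cancelled by a double pole of the second, and vice versa. This forces the two lattices of poles to coincide, i.e. $\{z : \alpha_1 z + \beta_1 \in \Lambda\} = \{z : \alpha_2 z + \beta_2 \in \Lambda\}$. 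Comparing these cosets of $\alpha_i^{-1}\Lambda$ inside $\C$ gives $\alpha_1^{-1}\Lambda = \alpha_2^{-1}\Lambda$, hence $(\alpha_1/\alpha_2)\Lambda = \Lambda$, so $u := \alpha_1/\alpha_2 \in \End(E)^\times$ is a unit; and matching the shifts gives $\alpha_1 z + \beta_1 \equiv \pm(\alpha_2 z + \beta_2) \pmod{\Lambda}$ as functions of $z$ (using $\wp$ even), so after absorbing signs we may write the map as $P \mapsto (P, [u]P, R)$ up to translation by a $2$-torsion point, which one checks can be absorbed into the choice of base point (or handled as part of the coordinate-permutation statement).

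Once the shape $\Phi(E) = \{(P, [u]P, R) : P \in E\}$ with $u$ a unit is established, I will plug back into \eqref{eq1} to pin down the two remaining constraints. Expanding $c_1\wp(z) + c_2\wp(uz)$ as a function of $z$: since $u$ is a unit, $\wp(uz)$ is again a Weierstrass function for the same lattice, and its Laurent expansion at $z=0$ begins $u^{-2}z^{-2} + O(z^2)$ (the constant term vanishes for $\wp$). Hence the principal part of $c_1\wp(z) + c_2\wp(uz)$ at $z=0$ is $(c_1 + c_2 u^{-2})z^{-2}$. For the left side of \eqref{eq1} to be constant we need this to vanish, giving $c_1 + c_2 u^{-2} = 0$, equivalently (multiplying by $u^2$) $c_1 u^2 + c_2 = 0$; I will state it in the normalization $c_1 + u^2 c_2 = 0$ after possibly replacing $u$ by $1/u$, which is also a unit and reparametrizes the same curve. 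Then the constant value of $c_1\wp(z) + c_2\wp(uz)$ is just $(c_1 + c_2)\cdot(\text{constant term of }\wp) = 0$ since $\wp$ has no constant term; comparing with the right-hand side $-\frac{A}{3}(c_1+c_2+c_3) - c_3(x(R) - A/3) = -\frac{A}{3}(c_1 + c_2) - c_3 x(R)$ yields $c_3 x(R) = -\frac{A}{3}(c_1+c_2)$, i.e. $x(R) = -\frac{A(c_1+c_2)}{3c_3}$, as claimed.

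The main obstacle I anticipate is the bookkeeping around the $2$-torsion ambiguity and the coordinate permutations: the argument naturally produces $\Phi(E) = \{(P, [\pm u]P + T, R) : P\}$ for a possible $2$-torsion point $T$, and one must argue carefully either that $T$ can be reabsorbed by re-choosing the uniformization (shifting $\beta_1$, which changes $R$ not at all since $R$ comes from the third coordinate, but shifts the second coordinate parametrization) or that the resulting curve is already of the form listed in the statement up to permutation. A secondary point requiring care is ruling out that the forced constant on the left could be made to match the right by some degenerate configuration not captured by the pole analysis — but since $\wp$ genuinely has poles and $c_1, c_2 \neq 0$, the pole-cancellation argument is robust. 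I would keep the write-up of the analytic claim self-contained, since it is reused (with three terms) in the case analysis for parts (c) and (d).
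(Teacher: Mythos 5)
The proposal is correct and follows essentially the same route as the paper: equality of the pole sets of $\wp(\alpha_1 z+\beta_1)$ and $\wp(\alpha_2 z+\beta_2)$ forces $\alpha_2=u\alpha_1$ with $u\in\End(E)^\times$, comparison of the $z^{-2}$-coefficients gives the relation between $c_1,c_2,u$, and the vanishing of the constant term in the Laurent expansion of $\wp$ pins down $x(R)=-\frac{A(c_1+c_2)}{3c_3}$. The only differences are cosmetic: the $\pm$/$2$-torsion ambiguity you worry about never actually arises, because equality of the pole cosets already gives $\beta_2\equiv u\beta_1 \pmod{\Lambda}$ exactly (you are working with the uniformization $z\mapsto \alpha_iz+\beta_i$, not merely with $\wp$-values), and your $u\leftrightarrow 1/u$ normalization wrinkle is the same one already present between the paper's statement ($c_1+u^2c_2=0$) and its own proof (which derives $c_1u^2+c_2=0$), so it does not count against you.
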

\begin{proof}
Without loss of generality, let us assume that $\Phi$ is constant on the third component. Then, Equation \eqref{eq1} becomes:
\begin{equation}\label{eq2}
c_1\wp(\alpha_1 z+\beta_1)+c_2\wp(\alpha_2 z+\beta_2)+c_3\wp(\beta_3)=-\frac{A}{3}(c_1+c_2+c_3).    
\end{equation}
This implies that $\Poles(\wp(\alpha_1 z+\beta_1))=\Poles(\wp(\alpha_2 z+\beta_2))$. Consequently, we have $(1/\alpha_1)\Lambda=(1/\alpha_2)\Lambda$, which is equivalent to stating that $\alpha_2=u\alpha_1$ for some $u\in \End(E)$. 
By computing the coefficient of $z^{-2}$ in Equation \eqref{eq2}, we obtain:
\[
\frac{c_1}{\alpha_1^2}+\frac{c_2}{u^2\alpha_1^2}=0,
\]
which implies $c_1u^2+c_2=0$. Since the Laurent expansion of $\wp$ does not have constant term, we obtain the following equation
\[
c_3\wp(\beta_3)=-\frac{A}{3}(c_1+c_2+c_3).
\]
Let $R\in E$ be the associated point to $\beta_3$ modulo $\Lambda$, then we have that
\[
x(R)=\wp(\beta_3)+\frac{A}{3}=-\frac{A(c_1+c_2)}{3c_3},
\]
and the image of $\Phi$ is of the curve $\{(P,[u] P,R)\colon P\in E\}$ which is the translate of an elliptic curve by $(e_E,e_E,R)$.
\end{proof}

\subsection{Proof of Theorem \ref{MainTheorem} cases (c) and (d)}\label{sectionmain}
In this subsection, we will concentrate on case where $\Phi$ maps onto each projection, that is, $\alpha_1\alpha_2\alpha_3\neq 0$.

Equation \eqref{eq1} implies that 
\begin{equation}\label{eq3}
\Poles(\wp(\alpha_i z+\beta_i))\subset \bigcup_{j\neq i} \Poles(\wp(\alpha_j z+\beta_j)).    
\end{equation}
We will begin by classifying the morphisms of varieties $\Phi\colon E\to E^3$ which map onto every coordinate and satisfy Equation \eqref{eq3}, but not necessarily mapping into $X_{c_1,c_2,c_3}$.

\begin{lemma}\label{Prop1}
Assume that there exist non trivial $\alpha_1,\alpha_2,\alpha_3\in\mathrm{End}(E)$ and $\beta_1,\beta_2,\beta_3\in\mathbb{C}$ such that Equation \eqref{eq3} holds.
If 
\begin{equation}\label{origen}
\bigcap_{j= 1}^3 \Poles(\wp(\alpha_j z+\beta_j))=\emptyset,
\end{equation} 
then $2$ is reducible in $\End(E)$ and there exist a unit $u$ of $\End(E)$ and an endomorphism $\lambda$ of degree $2$ such that $\alpha_j=u\alpha_i$ and $\alpha_k=\lambda\alpha_i$.
\end{lemma}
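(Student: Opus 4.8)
The plan is to translate the hypothesis into combinatorics of pole cosets. Since $\alpha_i\in\End(E)\setminus\{0\}$ satisfies $\alpha_i\Lambda\subseteq\Lambda$, the set $L_i:=\alpha_i^{-1}\Lambda$ is a lattice containing $\Lambda$ with $[L_i:\Lambda]=\deg\alpha_i$, and with $t_i:=-\alpha_i^{-1}\beta_i$ the pole set of $\wp(\alpha_iz+\beta_i)$ is exactly the coset $P_i:=L_i+t_i$. Thus \eqref{eq3} reads ``$P_i\subseteq\bigcup_{j\ne i}P_j$ for each $i$'' and \eqref{origen} reads ``$P_1\cap P_2\cap P_3=\emptyset$''. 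First I would record the one elementary tool: if a coset of a lattice $L$ is the disjoint union of a coset of a finite-index sublattice $A$ and a coset of a finite-index sublattice $B$, then $A=B$ and $[L:A]=2$. (Reduce modulo $A\cap B$ to a finite abelian group $G$; the two cosets have cardinalities $|A/(A\cap B)|$ and $|B/(A\cap B)|$, which must sum to $|G|$, forcing both indices to be $2$ and $G\cong(\Z/2)^2$; and two cosets of distinct order-two subgroups cannot partition $(\Z/2)^2$, so $A=B$.)

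Next I would form the ``meeting graph'' on $\{1,2,3\}$, with $i\sim j$ iff $P_i\cap P_j\ne\emptyset$. As each $P_i$ is nonempty and lies in the union of the other two, every vertex has degree $\ge1$, so the graph is a path or a triangle. In the \emph{path} case, say $P_1\cap P_3=\emptyset$, the covering of $P_1$ collapses to $P_1\subseteq P_2$, and symmetrically $P_3\subseteq P_2$, whence $P_2=P_1\sqcup P_3$; inclusions of cosets give $L_1,L_3\subseteq L_2$, so the partition lemma yields $L_1=L_3$ and $[L_2:L_1]=2$. Translating back, $\alpha_3/\alpha_1$ is a unit $u$, $\lambda:=\alpha_2/\alpha_1\in\End(E)$ has degree $[L_2:L_1]=2$, and $2=\lambda\bar\lambda$ factors into non-units — exactly the claimed conclusion (relabel $\{i,j,k\}=\{1,3,2\}$).

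The real work is the \emph{triangle} case, and I expect it to be the main obstacle. Here, for each $i$, the covering of $P_i$ together with \eqref{origen} writes $P_i$ as the disjoint union of the \emph{proper} cosets $P_i\cap P_j$ and $P_i\cap P_k$ (cosets of $L_i\cap L_j$ and of $L_i\cap L_k$; neither can equal $P_i$, else the triple intersection would be nonempty). The partition lemma then forces $L_i\cap L_j=L_i\cap L_k$ of index $2$ in $L_i$ for each $i$, hence a single lattice $M:=L_1\cap L_2\cap L_3$ with $[L_i:M]=2$ for all $i$, and the $L_i$ pairwise distinct; in particular all $\deg\alpha_i$ are equal. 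I would then pass to the arithmetic of $\mathcal O:=\End(E)$: each $L_i$ is a proper (hence invertible) $\mathcal O$-module, and $[L_i:M]=2$ means $M=\mathfrak p_iL_i$ for a prime ideal $\mathfrak p_i$ with $\mathcal O/\mathfrak p_i=\F_2$; cancelling invertible modules in $\mathfrak p_iL_i=\mathfrak p_jL_j$ (using $L_i=c\,L_j$ for a homothety $c$) gives $c\,\mathfrak p_i=\mathfrak p_j$, so the $\mathfrak p_i$ are pairwise homothetic prime ideals of norm $2$. The goal is then to show this cannot happen with the $L_i$ distinct: if $\mathcal O=\Z$ there is no ideal of norm $2$; if $\mathcal O$ is an imaginary quadratic order, one uses that it has at most two prime ideals with residue field $\F_2$ — none if $2$ is inert, and none \emph{invertible} if $2$ divides the conductor — to conclude that two of the $\mathfrak p_i$ coincide, forcing $\alpha_i\mathcal O=\alpha_j\mathcal O$ and $L_i=L_j$, a contradiction. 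So the triangle case should not occur, which finishes the proof.

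Making that final argument airtight in the non-maximal (even-conductor) CM case is the delicate point: there one cannot simply count primes of norm $2$, and must exploit that the period lattice $\Lambda$ is a \emph{proper} $\End(E)$-module in order to pin down $M$ and the ideals $\mathfrak p_i$ precisely. Everything else — the pole-set dictionary, the partition lemma, the meeting-graph dichotomy, and the path case — is routine.
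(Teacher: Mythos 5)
Your pole--coset dictionary, the partition lemma, the path/triangle dichotomy, and the path case are all correct, and up to bookkeeping they parallel the paper's own argument: your partition lemma is the paper's Lemma~\ref{Lemma 2}, and your path case is its Case 1 (the paper organizes the cases by which inclusions $\tfrac{1}{\alpha_j}\Lambda\subset\tfrac{1}{\alpha_1}\Lambda$ hold, and kills its intermediate Case 2 by counting, on $E$, the points where each coordinate of the associated map hits $e_E$). The real issue is the triangle case, exactly where you flag that your argument is not airtight. (Also, your non-CM aside should invoke the fact that the index of a homothetic sublattice is a square, rather than ``no ideal of norm $2$''; that part is easily repaired.)

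The triangle gap is not a removable technicality: the configuration you are trying to exclude actually exists when $\End(E)=\Z[\sqrt{-3}]$, the conductor-$2$ order of $\Q(\sqrt{-3})$. Take $\Lambda=\Z[\sqrt{-3}]$, write $\omega=\tfrac{1+\sqrt{-3}}{2}$, and set $\alpha_1=2$, $\alpha_2=1+\sqrt{-3}$, $\alpha_3=1-\sqrt{-3}$, $\beta_1=\beta_2=0$, $\beta_3=-\tfrac{1-\sqrt{-3}}{2}$. The three pole sets are
\[
\Z[\omega]\cup\bigl(\tfrac12+\Z[\omega]\bigr),\qquad
\Z[\omega]\cup\bigl(\tfrac{1+\omega}{2}+\Z[\omega]\bigr),\qquad
\bigl(\tfrac12+\Z[\omega]\bigr)\cup\bigl(\tfrac{1+\omega}{2}+\Z[\omega]\bigr),
\]
so each is contained in the union of the other two and the triple intersection is empty, i.e.\ \eqref{eq3} and \eqref{origen} hold; with $L_i=\tfrac{1}{\alpha_i}\Lambda$ one has $L_1\cap L_2=L_1\cap L_3=L_2\cap L_3=\Z[\omega]$ of index $2$ in each $L_i$, yet no ratio $\alpha_j/\alpha_i$ is a unit of $\Z[\sqrt{-3}]$ and $\Z[\sqrt{-3}]$ has no element of norm $2$. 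This is precisely the even-conductor mechanism you were worried about: here the index-$2$ ideal is $(2,1+\sqrt{-3})=2\Z[\omega]$, which is not invertible, and multiplication by the unit $\omega$ of its multiplier ring $\Z[\omega]$ fixes it while permuting the three lattices, so the cancellation ``$c\,\mathfrak p_i=\mathfrak p_j$ forces $L_i=L_j$'' breaks down. Consequently your plan to prove that the triangle case does not occur cannot be completed from \eqref{eq3} and \eqref{origen} alone; note that the paper's Case 3 does not cover this example either, since its list of degree-$4$ endomorphisms omits $\pm1\pm\sqrt{-3}$ for this order. To rule out the triangle configuration one must use the full relation \eqref{eq1}: comparing the $z^{-2}$-coefficients at the three families of doubly shared poles gives $c_i/\alpha_i^2+c_j/\alpha_j^2=0$ for all three pairs, which is impossible, and this is the kind of additional input your argument (as well as the statement you were asked to prove) actually needs.
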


\begin{remark}
The Equation \eqref{origen} is equivalent to state that the associated morphism $\Phi$ does not pass through the origin.
\end{remark}

Let us provide an example satisfying Equation \ref{eq3} before proving Lemma \ref{Prop1}:

\begin{example}\label{ejemplo25}
Consider the elliptic curve $E$ given by the equation $y^2=x^3-x$. This curve has complex multiplication with $\End(E)\cong\Z[i]$ and $i(x,y)=(-x,iy)$. Define the morphism $\Phi\colon E\to E^3$ by
\[
\Phi(P)=([1+i]P,P,P-(0,0)).
\]
Since $(1+i)(0,0)=e_E$, the map $\Phi$ satisfies Equation \eqref{eq3}.
\end{example}

Lemma \ref{Prop1} implies that Example \ref{ejemplo25} is the only kind of morphism that does not pass through the origin and  satisfies Equation \eqref{eq3}. First, we prove a result that will be useful in the proof of Lemma \ref{Prop1}.

\begin{lemma}\label{Lemma 2}
Let $G$ be an abelian group and let $H$ and $H'$ be proper subgroups of $G$ such that $G=H\cup (H'+b)$ for some $b\in G$. Then $H=H'$ and consequently $[G:H]=2$.
\end{lemma}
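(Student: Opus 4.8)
The plan is to exploit the coset structure directly. Since $H$ and $H'$ are proper subgroups, neither $H$ nor the coset $H'+b$ is all of $G$, so in the covering $G = H \cup (H'+b)$ both pieces are needed; in particular there exists $g_0 \in G \setminus H$, and for such an element we must have $g_0 \in H'+b$. First I would observe that $b$ itself lies in $G = H \cup (H'+b)$: if $b \in H'+b$ then $0 \in H'$, which is automatic, so this case is vacuous and instead the useful dichotomy is whether $b \in H$ or not. If $b \in H$, then $H'+b$ and $H'$ have the same intersection pattern with $H$; more to the point, pick any $h \in H$. Then $h \in H$ trivially; but also consider $h' \in H' \setminus H$ (which exists since $H' \not\subseteq G$ forces... actually I need $H' \neq H$ to produce such an element, so let me instead argue by symmetry).

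The cleaner approach: I would show $H \subseteq H'$ and $H' \subseteq H$ separately. For $H' \subseteq H$: suppose not, and take $h' \in H' \setminus H$. Since $H$ is a proper subgroup there is $g \in G \setminus H$. Consider the elements $g$ and $g + h'$ (if $g + h' \in H$ we could replace, so assume not) — both lie in $G \setminus H$, hence both lie in $H' + b$, so their difference $h' = (g+h') - g \in H'$, which is consistent but gives nothing. The genuinely forcing observation is: every element of $G \setminus H$ lies in the single coset $H'+b$, so $G \setminus H$ is contained in one coset of $H'$. A set that is the complement of a subgroup and is contained in a single coset of another subgroup is very restrictive. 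Concretely, fix $g \in G\setminus H$. Then $G \setminus H \subseteq H' + g$ (since $H'+b = H'+g$ as both contain $g$ and are cosets of $H'$). Now for any $h \in H$, the element $g + h$ is not in $H$ (else $g \in H$), so $g+h \in H'+g$, giving $h \in H'$. Hence $H \subseteq H'$. By symmetry of the hypothesis — rewrite $G = H' \cup (H + (-b))$? This needs checking, but $(H'+b)^c = H \setminus (H'+b) \subseteq H$, so $H' + b \supseteq G \setminus H \supseteq \dots$; running the same argument with the roles swapped (using that $H'+b$ is proper, so $G \setminus (H'+b) \neq \emptyset$ and is contained in the coset $H + g'$ for any $g' \notin H'+b$, forcing $H' \subseteq H$) yields the reverse inclusion. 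Therefore $H = H'$.

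Finally, with $H = H'$ we have $G = H \cup (H+b)$, a union of two cosets of $H$; since $H$ is a proper subgroup this forces exactly the two cosets $H$ and $H+b$ to exhaust $G$ (any coset of $H$ meeting $G\setminus H$ must equal $H+b$), so $[G:H] = 2$.

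The main obstacle I anticipate is making the ``symmetry'' step fully rigorous: the hypothesis $G = H \cup (H'+b)$ is not literally symmetric in $H$ and $H'$, so I cannot just swap letters. The fix is to note that $G \setminus (H'+b) \subseteq H$ is a nonempty set (properness of $H'+b$) lying inside the subgroup $H$, and simultaneously $G \setminus H \subseteq H'+b$; picking $g \notin H'+b$ (so $g \in H$) one shows as above that $H' \subseteq H$ by testing elements $g + h'$ for $h' \in H'$. I should be careful that all the needed ``witness'' elements exist, which is exactly where properness of both $H$ and $H'+b$ gets used.
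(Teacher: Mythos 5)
Your argument is correct and is essentially the paper's proof: both establish $H\subseteq H'$ and $H'\subseteq H$ by noting that a coset disjoint from one piece of the covering must sit inside the other piece (you use an arbitrary $g\notin H$ and the identification $H'+b=H'+g$ where the paper uses $b\notin H$ and $H+b\subseteq H'+b$), and then read off $[G:H]=2$ from $G=H\cup(H+b)$. The only suggestion is editorial: the exploratory detours in the middle can be deleted, since the two "forcing" observations and the final coset count are the whole proof.
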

\begin{proof}
Firstly, note that $b\notin H$, otherwise $G=H-b\cup H'=H\cup H'$, which is impossible as $G$ cannot be the union of two proper subgroups. Since $H$ and $H+b$ are disjoint, $H+b\subset H'+b$, then $H\subset H'$. Similarly, $H'\subset H$, so $H=H'$ and $[G:H]=2$. 
\end{proof}

\begin{proof}[Proof of Lemma \ref{Prop1}]
Let us recall that $
\Poles(\wp(\alpha_i z+\beta_i))=\frac{1}{\alpha_i}\Lambda-\frac{\beta_i}{\alpha_i}$. Since Equation \eqref{eq3} is satisfied, we obtain that
\begin{equation}\label{subset3}
\frac{1}{\alpha_i}\Lambda-\frac{\beta_i}{\alpha_i}\subset
\bigcup_{j\neq i}  \left(\frac{1}{\alpha_j}\Lambda-\frac{\beta_j}{\alpha_j}\right).    
\end{equation}
Additionally, there exists $i\in \{1,2,3\}$ such that $\Poles(\wp(\alpha_i z+\beta_i))\cap \Poles(\wp(\alpha_j z+\beta_j))\neq\emptyset$ for each $j\neq i$. Without loss of generality, we assume that $\Poles(\wp(\alpha_1 z+\beta_1))\cap \Poles(\wp(\alpha_j z+\beta_j))\neq\emptyset$, for $j=2,3$. 

Now, we fix $\delta_j\in \Poles(\wp(\alpha_1 z+\beta_1))\cap \Poles(\wp(\alpha_j z+\beta_j))$. Using the fact that $\wp$ is $\Lambda$ periodic, we can rewrite Equation \eqref{subset3} as
\[
\frac{1}{\alpha_1}\Lambda+\delta_2\subset\left(\frac{1}{\alpha_2}\Lambda+\delta_2\right)\cup\left(\frac{1}{\alpha_3}\Lambda+\delta_3\right).
\]
Subtracting $\delta_2$ we obtain:
\[
\frac{1}{\alpha_1}\Lambda\subset\left(\frac{1}{\alpha_2}\Lambda\right)\cup\left(\frac{1}{\alpha_3}\Lambda+\delta\right),
\]
where $\delta=\delta_3-\delta_2$. Due to the fact that $\delta_3\in \Poles(\wp(\alpha_1 z+\beta_1))=\frac{1}{\alpha_3}\Lambda+\delta_2$, we have that $\delta\in \frac{1}{\alpha_1}\Lambda$. As a consequence, we obtain 
\begin{align}\label{groups}
\frac{1}{\alpha_1}\Lambda&=\left(\frac{1}{\alpha_1}\Lambda\cap \frac{1}{\alpha_2}\Lambda\right)\cup\left(\frac{1}{\alpha_1}\Lambda\cap \left(\frac{1}{\alpha_3}\Lambda+\delta\right)\right)\nonumber\\
&=\left(\frac{1}{\alpha_1}\Lambda\cap \frac{1}{\alpha_2}\Lambda\right)\cup\left(\left(\frac{1}{\alpha_3}\Lambda\cap \left(\frac{1}{\alpha_1}\Lambda-\delta\right)\right)+\delta\right)\nonumber\\
&=\left(\frac{1}{\alpha_1}\Lambda\cap \frac{1}{\alpha_2}\Lambda\right)\cup\left(\left(\frac{1}{\alpha_3}\Lambda\cap \frac{1}{\alpha_1}\Lambda\right)+\delta\right).
\end{align}
Assuming that $\bigcap_{j= 1}^3 \Poles(\wp(\alpha_j z+\beta_j))=\emptyset$, we have that $\frac{1}{\alpha_1}\Lambda\not\subset \frac{1}{\alpha_2}\Lambda$ and $\frac{1}{\alpha_1}\Lambda\not\subset \frac{1}{\alpha_3}\Lambda$. Then, applying Lemma \ref{Lemma 2} to Equation \eqref{groups}, we get $\frac{1}{\alpha_2}\Lambda\cap \frac{1}{\alpha_1}=\frac{1}{\alpha_3}\Lambda\cap \frac{1}{\alpha_1}\Lambda$, and we also have that $[\frac{1}{\alpha_1}\Lambda:\frac{1}{\alpha_3}\Lambda\cap \frac{1}{\alpha_1}\Lambda]=2$.

We will work on the following three different cases separately:
\begin{itemize}
    \item [(1)] $\frac{1}{\alpha_2}\Lambda\subset \frac{1}{\alpha_1}\Lambda$ and $\frac{1}{\alpha_3}\Lambda\subset \frac{1}{\alpha_1}\Lambda$,
    \item [(2)] $\frac{1}{\alpha_2}\Lambda\subset \frac{1}{\alpha_1}\Lambda$ and $\frac{1}{\alpha_3}\Lambda\not\subset \frac{1}{\alpha_1}\Lambda$, and
    \item [(3)] $\frac{1}{\alpha_2}\Lambda\not\subset \frac{1}{\alpha_1}\Lambda$ and $\frac{1}{\alpha_3}\Lambda\not\subset \frac{1}{\alpha_1}\Lambda$.
\end{itemize}

\textit{Case 1:} In this case, we have that 
\[
\frac{1}{\alpha_2}\Lambda=\frac{1}{\alpha_2}\Lambda\cap \frac{1}{\alpha_1}=\frac{1}{\alpha_3}\Lambda\cap \frac{1}{\alpha_1}\Lambda=\frac{1}{\alpha_3}\Lambda.
\]
Hence, there exists $u\in \End(E)^{\times}$, such that $\alpha_2=u\alpha_3$. Additionally, the index of the lattices $\frac{1}{\alpha_1}\Lambda$ and $\frac{1}{\alpha_2}\Lambda$ is equal to $2$. Since $\alpha_2$ identifies $E$ with its quotient by $\ker(\alpha_2)$, and by the universal property of quotients, there exists an endomorphism $\lambda$ of degree $2$ such that $\alpha_1=\lambda\alpha_2$. 

\textit{Case 2:} In this case, we have 
\[
\frac{1}{\alpha_2}\Lambda=\frac{1}{\alpha_2}\Lambda\cap \frac{1}{\alpha_1}\Lambda=\frac{1}{\alpha_3}\Lambda\cap \frac{1}{\alpha_1}\Lambda\subset\frac{1}{\alpha_3}\Lambda.
\]
Since $\Poles(\wp(\alpha_1 z+\beta_1))\cap \Poles(\wp(\alpha_3 z+\beta_3))\neq\emptyset$, we can rewrite the inclusion \eqref{subset3} as follows:
\[
\frac{1}{\alpha_3}\Lambda=\left(\frac{1}{\alpha_2}\Lambda\right)\cup\left(\frac{1}{\alpha_2}\Lambda+\delta\right).
\]
Thus $[\frac{1}{\alpha_3}\Lambda: \frac{1}{\alpha_2}\Lambda]=[\frac{1}{\alpha_1}\Lambda: \frac{1}{\alpha_2}\Lambda]=2$, which, as was explained in the previous case, implies that there are two non-associated endomorphisms $\lambda_1$ and $\lambda_2$ of degree $2$ such that $\alpha_1=\lambda_1\alpha_2$ and $\alpha_3=\lambda_2\alpha_2$. 
By Proposition II.2.3.1 from \cite{Silverman2} we have that $\lambda_i=[\frac{1\pm\sqrt{-7}}{2}]$, consequently, $\End(E)\cong\Z[\frac{1+\sqrt{-7}}{2}]$. 

The previous construction defines a morphism $\Phi\colon E\to E^3$ whose image is of the form:
\[
\left\{\left(\left[\frac{1-\sqrt{-7}}{2}\right]P+Q_1,P+Q_2, \left[\frac{1+\sqrt{-7}}{2}\right]P+Q_3\right)\colon P\in E\right\},
\]
and $\Phi$ does not pass through the origin. Additionally, when one coordinate is $e_E$ other coordinate must be $e_E$ as well. However, this arrangement is impossible, since the first and third coordinates become $e_E$ twice, while the second coordinate becomes $e_E$ only when $P=-Q_2$.

\textit{Case 3:}
We begin by applying the previous process to $\frac{1}{\alpha_2}\Lambda$ and $\frac{1}{\alpha_2}\Lambda$, which yields:
\[
M:= \frac{1}{\alpha_1}\Lambda\cap\frac{1}{\alpha_2}\Lambda=\frac{1}{\alpha_1}\Lambda\cap\frac{1}{\alpha_3}\Lambda=\frac{1}{\alpha_2}\Lambda\cap\frac{1}{\alpha_3}\Lambda,
\]
and $[\frac{1}{\alpha_i}\Lambda: M]=2$. In particular, we have that 
\[
\left[\frac{1}{\alpha_i}\Lambda:\frac{2}{\alpha_j}\Lambda\right]=\left[\frac{1}{\alpha_i}\Lambda:M\right]\cdot\left[M:\frac{2}{\alpha_j}\Lambda\right]=4.
\]
Therefore, for every $i$ and $j$ in the set $\{1,2,3\}$ with $i\neq j$, there is $\phi_{ij}\in \mathrm{End}(E)$ of degree $4$ such that $\phi_{ij}\alpha_i=2\alpha_j$. 

Solving the equations $a^2+db^2=4$ for $d<0$ with $d\equiv 1,2 \text{ (mod } 4)$ and $a^2+db^2=16$ for $d<0$ and $d\equiv 3 \text{ (mod } 4)$ in $\Z^3$, we find that the endomorphisms of an elliptic curve defined over $\C$ of degree $4$ are $2u$ for some unit $u$ in $\End(E)$, $\pm[\frac{1\pm\sqrt{-15}}{2}]$, or $\pm[\frac{3\pm\sqrt{-7}}{2}]$. 

Notice that if $\phi_{ij}=2u$ for some $u\in\End(E)^{\times}$, then $u\alpha_i=\alpha_j$, which contradicts the fact that $[\frac{1}{\alpha_i}\Lambda: M]=2$. If non of the $\phi_{ij}=2u$, there exists an $i$ in $\{1,2,3\}$ such that $\phi_{ij}=\pm \phi_{ik}$, implying $\alpha_j=\pm\alpha_k$, which again contradicts $[\frac{1}{\alpha_i}\Lambda: M]=2$.

In conclusion, we have established that the only plausible is Case 1, and in this case, there exists an endomorphism $\lambda$ of degree $2$ such that $\alpha_1=\lambda\alpha_2$ as desired.
\end{proof}

To finish this subsection, we will address Case (d) of Theorem \ref{MainTheorem}. 
\begin{lemma}\label{CM ec}
Let $a\in \C^\times$ and consider the elliptic curve $E_a:y^2=x^3+ax^2-3a^2x+a^3$. The image of the map $\Phi\colon E\to E^3$ defined by $\Phi(P)=(\sqrt{-2}P,\pm P,\pm P+(a,0))$ is contained in $X_{c_1,c_2,c_3}$, where $c_1=2c_2=2c_3$.
\end{lemma}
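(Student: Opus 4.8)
The plan is to verify directly, using the Weierstrass parametrization set up in Subsection 2.3, that the three $x$-coordinates of the triple $(\sqrt{-2}P,\pm P,\pm P+(a,0))$ satisfy the required linear relation with $c_1=2c_2=2c_3$; equivalently, after rescaling, that $x(\sqrt{-2}P)+2x(P)+2x(P+(a,0))=0$ for all $P\in E_a$. Since $x$ is invariant under $P\mapsto -P$, the sign ambiguities in the second and third components are irrelevant, so it suffices to treat $\Phi(P)=(\sqrt{-2}P,P,P+(a,0))$. First I would record that $E_a:y^2=x^3+ax^2-3a^2x+a^3$ genuinely has $\sqrt{-2}\in\End(E_a)$: completing the cube via $x\mapsto x-a/3$ brings the curve to the short Weierstrass form $y^2=4x^3+bx+c$ of \eqref{Weierstrass2} (up to the $y\mapsto y/2$ normalization), and one checks that the resulting $j$-invariant is $j=8000=20^3$, the $j$-invariant of the curve with CM by $\Z[\sqrt{-2}]$. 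Hence $\End(E_a)\supseteq\Z[\sqrt{-2}]$ and $\sqrt{-2}$ makes sense as an isogeny $E_a\to E_a$.

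Next I would make the point $(a,0)$ explicit: it is a $2$-torsion point, being a root of $x^3+ax^2-3a^2x+a^3=(x-a)(x^2+2ax-a^2)$. Translation by a $2$-torsion point has a classical rational formula on the $x$-line, so $x(P+(a,0))$ is an explicit rational function of $x(P)$; concretely, if $x=x(P)$ then a computation with the group law gives $x(P+(a,0))=\dfrac{ax+\text{(lower order in }x)}{x-a}$ — I would compute this ratio precisely, but it is the standard "$x\mapsto \lambda+ \frac{\text{const}}{x-\lambda}$"-type involution determined by the factorization of the cubic. Similarly, using the CM formula for the $\sqrt{-2}$-isogeny (a degree-$2$ map, hence $x(\sqrt{-2}P)$ is a degree-$2$ rational function of $x(P)$, of the form $\frac{p(x)}{q(x)}$ with $\deg p=2$, $\deg q=1$, normalized so that it has the right behaviour at the origin), I would write $x(\sqrt{-2}P)$ as an explicit rational function of $x=x(P)$. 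Plugging the two rational expressions into $x(\sqrt{-2}P)+2x(P)+2x(P+(a,0))$ and clearing the common denominator (which will be $x-a$ times the degree-$1$ denominator of the $\sqrt{-2}$-formula, or a suitable multiple thereof), the claim reduces to the vanishing of a single polynomial identity in $x$ over $\C(a)$, which one checks by expanding.

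Alternatively — and this is probably the cleaner route to actually carry out — I would argue analytically in the style of the rest of Section 2. Write $x=\wp(z)+A/3$ for the local coordinate on $E_a$, with $A=a$, and let $\alpha=\sqrt{-2}$, $\beta$ a half-period corresponding to $(a,0)$. Then the identity to prove becomes
\[
\wp(\sqrt{-2}\,z)+2\wp(z)+2\wp(z+\beta)=-\frac{A}{3}\,(2+2+2)\cdot\frac{1}{2}= -\,\frac{5a}{3}\cdot\frac{?}{}
\]
— more precisely the right-hand side is the constant $-\frac{A}{3}(c_1+c_2+c_3)/c_1\cdot c_1$ read off from \eqref{eq1}, namely $-\frac{A}{3}\cdot 5$ after normalizing $c_1=2,c_2=c_3=1$; I would pin this constant down at the end. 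The left-hand side is an elliptic function of $z$ (periodic for $\Lambda$, since $\sqrt{-2}\Lambda\subseteq\Lambda$ when $E_a$ has CM by $\Z[\sqrt{-2}]$), and I would show it is holomorphic: the triple pole structure is exactly as in \eqref{eq3}, the pole of $\wp(\sqrt{-2}z)$ at $z\equiv 0$ being cancelled by the poles of $2\wp(z)$ and $2\wp(z+\beta)$ once one checks the Laurent coefficients agree — this is where the specific relation $c_1=2c_2=2c_3$ gets used, matching the $z^{-2}$-coefficients via the degree-$2$ isogeny (coefficient $1/\alpha^2=-1/2$ for the first term against $2+2$ distributed over the two preimages of $0$ under $\sqrt{-2}$). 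A bounded elliptic function is constant, and evaluating at one convenient point (e.g. a $2$-torsion point other than $(a,0)$) identifies the constant and finishes the proof.

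The main obstacle is getting the $\sqrt{-2}$-isogeny in usable closed form on the $x$-line for this particular model $E_a$ and tracking the normalization constants: one must be careful that the CM action is taken with respect to the identification of $\End(E_a)$ with a subring of $\C$ that is compatible with the chosen differential (i.e. with the local coordinate $z$), since $\sqrt{-2}$ and $-\sqrt{-2}$ differ, and that the constant on the right-hand side of \eqref{eq1} is matched exactly. Once the explicit degree-$2$ rational map $x(\sqrt{-2}P)$ and the degree-$1$ translation map $x(P+(a,0))$ are in hand, the remaining verification is a mechanical polynomial identity in $x$ over $\C(a)$, which I would present as a single displayed computation.
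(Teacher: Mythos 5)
Your overall strategy is the same as the paper's: make the $\sqrt{-2}$-endomorphism and the translation by the $2$-torsion point $(a,0)$ explicit as rational functions of $x(P)$ (the paper does this via the isomorphism $(x,y)\mapsto(x/a-1,y/\sqrt{a^3})$ onto $y^2=x^3+4x^2+2x$ and Proposition II.2.3.1 of Silverman, getting $x([\sqrt{-2}](X,Y))=-\tfrac12\bigl(X+a+\tfrac{2a^2}{X-a}\bigr)$ and $x((X,Y)+(a,0))=\tfrac{aX+a^2}{X-a}$), and then check a single rational identity; your remarks that the signs $\pm$ are irrelevant because $x$ is even, and that the special points $P=e_E,(a,0)$ land in $X_{c_1,c_2,c_3}$ anyway, are also fine. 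However, there is a concrete error at the very start that would make your verification fail: you transposed the coefficients. The condition $c_1=2c_2=2c_3$ means $(c_1,c_2,c_3)\propto(2,1,1)$, so the identity to prove is $2\,x(\sqrt{-2}P)+x(P)+x(P+(a,0))=0$, not $x(\sqrt{-2}P)+2x(P)+2x(P+(a,0))=0$ as you state. The latter is simply false: plugging the explicit formulas in, $-\tfrac12\bigl(X+a+\tfrac{2a^2}{X-a}\bigr)+2X+2\,\tfrac{aX+a^2}{X-a}$ is not identically zero (test any value of $X$), whereas with the correct weights the computation collapses to $0$ exactly as in the paper. The same flip corrupts your analytic route: with weights $(1,2,2)$ the $z^{-2}$-coefficients at $z\equiv 0$ give $-\tfrac12 c_1+c_2=\tfrac32\neq 0$, so the sum is not holomorphic and is not constant; with $(2,1,1)$ the poles do cancel at both $z\equiv 0$ and $z\equiv-\beta$, and the constant is $-\tfrac{A}{3}(c_1+c_2+c_3)=-\tfrac{4a}{3}$, not the $-\tfrac{5A}{3}$ you wrote (your own ``normalizing $c_1=2,\ c_2=c_3=1$'' is inconsistent with summing to $5$).

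Beyond the coefficient flip, the computational core of the lemma --- the explicit closed form of $x([\sqrt{-2}]\,\cdot\,)$ on the model $E_a$ and of $x(\cdot+(a,0))$ --- is exactly what you defer (``I would compute this ratio precisely''), and it is precisely the content the paper supplies; so as written the proposal does not yet constitute a proof. Once you correct the weights to $(2,1,1)$ and carry out those two formulas, either of your routes closes the argument: the algebraic one is literally the paper's proof, and the holomorphic-elliptic-function-is-constant argument is a valid (slightly longer) alternative, provided you evaluate the constant correctly.
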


\begin{proof}
Firstly, observe that $\Phi((a,0))=(e_E, (a,0),e_E)$ and $\Phi(e_E)=(e_E,e_E,(a,0))$. By Lemma \ref{Lemma}, we know that $\Phi(e_E), \Phi((a,0))\in X_{c_1,c_2,c_3}$. Note that $E_a$ is isomorphic over $\C$ to $E:y^2=x^3+4x^2+2x$ via the morphism $(x,y)\mapsto(x/a-1,y/\sqrt{a^3})$. Using Proposition II.2.3.1 from \cite{Silverman2} we obtain an explicit formula for the endomorphism $[\sqrt{-2}]$ in $E_a$:
\[
[\sqrt{-2}](X,Y)=\left(-\frac{1}{2}\left(X+a+\frac{2a^2}{X-a}\right),\frac{-1}{2\sqrt{-2}}Y\left(1-\frac{2a^2}{(X-a)^2}\right)\right),
\]
for $(X,Y)\neq(a,0),e_E$. On the other hand, we have that the $x$-coordinate of $(X,Y)+(a,0)$ is $(aX+a^2)/(X-a)$, whenever $(X,Y)\neq(a,0),e_E$. 

Now, we claim that for $P=(X,Y)\neq (a,0),e_E$ we have
$$
\Phi(P)=([\sqrt{-2}](X,Y), (X,\pm Y), (X,\pm Y)+(a,0))\in X_{c_1,c_2,c_3}.
$$ 
To see this, notice that:
\begin{align*}
2x([\sqrt{-2}](X,Y))+X +x((X,\pm Y)+(a,0))&= -\left(X+a+\frac{2a^2}{X-a}\right) +X +\frac{aX+a^2}{X-a}\\
&=\frac{-a(X-a)-2a^2 +aX+a^2}{X-a}=0,
\end{align*}
which yields the desired result.
\end{proof}

\begin{proof}[Proof of Theorem \ref{MainTheorem}] By Lemmas \ref{Lemma} and \ref{Lemma 1}, we only need to consider morphisms $\Phi$ which map onto every component and whose image is contained in $X_{c_1,c_2,c_3}$.
Let us begin by considering the case where $\Phi$ passes through the origin $(e_E,e_E,e_E)$. Here we can rewrite Equation \eqref{eq1} as:
\begin{equation*}\label{origin equation}
c_1\wp(\alpha_1 z)+c_2\wp(\alpha_2 z)+c_3\wp(\alpha_3 z)=-\frac{A}{3}(c_1+c_2+c_3).
\end{equation*}
In particular, we obtain the inclusion:
\begin{equation*}
\frac{1}{\alpha_1}\Lambda\subset
\frac{1}{\alpha_2}\Lambda\cup \frac{1}{\alpha_3}\Lambda.    
\end{equation*}
Notice that at least two elements from the set  
$$
\left\{\frac{1}{\alpha_1},\frac{1}{\alpha_1}\tau,\frac{1}{\alpha_1}(1+\tau)\right\}
$$ 
belong to $\frac{1}{\alpha_2}\Lambda$ or $\frac{1}{\alpha_3}\Lambda$. Hence, we obtain one of the following inclusions:
\begin{equation*}
\frac{1}{\alpha_1}\Lambda\subset
\frac{1}{\alpha_2}\Lambda\qquad\text{or}\qquad \frac{1}{\alpha_1}\Lambda\subset\frac{1}{\alpha_3}\Lambda.    
\end{equation*}

Hence, $\alpha_1\mid \alpha_2$ or $\alpha_1\mid \alpha_3$. Symmetrically, we also obtain $\alpha_2\mid \alpha_1$ or $\alpha_2\mid \alpha_3$, and $\alpha_3\mid \alpha_1$ or $\alpha_3\mid \alpha_2$. In any case $\alpha_j=u\alpha_i$ for $i\neq j$ and $u\in(\End(E))^{\times}$. Without loss of generality, we assume that $\alpha_2=u\alpha_1$ for some $u\in(\End(E))^{\times}$. 

Given that $\wp$ is a transcendental meromorphic function with infinitely many zeros, we can find infinitely many integers $n$ satisfying the equation:
\[
c_1\alpha_{1}^{2n}+c_2u^{2n}\alpha_{1}^{2n}+c_3\alpha_{3}^{2n}=0, 
\]
Thus, we can express this equation as follows:
\begin{equation}\label{origin}
\frac{c_1+c_2u^{2n}}{-c_3}=\left(\frac{\alpha_3}{\alpha_1}\right)^{2n},
\end{equation}
which is satisfied by infinitely many integers. Note that the right-hand side of Equation \eqref{origin} takes only finitely many values. Therefore, $\alpha_3/\alpha_1=v$ is a root of unity.  

By computing the coefficient of $z^{-2}$ in Equation \eqref{origin} we get 
\[
\frac{c_1}{\alpha_1^2}+\frac{c_2}{u^2\alpha_1^2}+\frac{c_3}{v^2\alpha_1^2}=0.
\]
Therefore, we obtain that the image of $\Phi$ is $(P,uP,vP)$ as in case (c) of Theorem \ref{MainTheorem}.

Finally, we assume that $\Phi$ does not pass through the origin. Since $(e_E,e_E,e_E)\notin \Phi(E)$, the set of common poles of the functions $\wp(\alpha_j z+\beta_j)$ for $j=1,2,3$ is empty.
This allows us to apply Lemma \ref{Prop1}. Therefore, we only need to verify that the following equation has no solution in $c_1,c_2,c_3$:
\[
c_1\wp(\lambda z)+c_2\wp(z)+c_3\wp(uz+\beta)=-\frac{A}{3}(c_1+c_2+c_3),
\]
where $u$ is a unit, $\lambda$ is an endomorphism of degree $2$ and $\beta\in \frac{1}{\lambda}\Lambda$.

Computing the coefficient of $z^{-2}$, we find that $\lambda^2\in \Q$ for some prime $\lambda$ above $2$. Consequently, $\lambda$ must be $\pm\sqrt{-2}$, implying that $\End(E)\cong \Z[\sqrt{-2}]$. In this case, we obtain that $c_1=2c_2=2c_3$, and we obtain the equation:
\begin{equation*}
2\wp(\pm\sqrt{-2}z)+\wp(z)+\wp
(\pm z+\beta)=-\frac{4A}{3},   
\end{equation*}
where $\beta\in \frac{1}{\sqrt{-2}}\Lambda$. 
Therefore, there exists an element $R\in\ker(\sqrt{-2})$ such that $x(R)=\frac{4A}{3}$. 

Let $\epsilon_1$ and $\epsilon_2$ be the other two roots of the cubic polynomial from Equation \eqref{Weierstrass2}. Consider the $2$-torsion points $P_i=(\epsilon_i,0)$ and we claim that $\Phi(P_1)=((\frac{4A}{3},0),P_1,P_2)$. To see this, notice that $[\sqrt{-2}]([\sqrt{-2}]P_i)=[-2]P_i=0$ and $P_1+(\frac{4A}{3},0)=P_2$. Since $E$ has complex multiplication by $\Z[\sqrt{-2}]$, $E$ is isomorphic over $\C$ to $y^2=x^3+4x^2+2x$ via the isomorphism $(x,y)\mapsto (\alpha^2x+\beta,\alpha^3y)$. In this case, we have that $P_1=((-2+\sqrt{-2})\alpha^2+\beta,0)$ and
$$
\Phi(((-2+\sqrt{-2})\alpha^2+\beta,0))= \left((\beta,0),((-2+\sqrt{-2})\alpha^2+\beta,0),((-2-\sqrt{-2})\alpha^2+\beta,0)\right).
$$
Therefore, $\Phi((-2+\sqrt{-2})\alpha^2+\beta,0))\in X_{c_1,c_2,c_3}$ with $c_1=2c_2=2c_3$ if and oly if $\beta=\alpha^2$, thus, 
there exists $a\in \C^{\times}$ such that $E$ is defined by the equation:
\[
y^2=x^3+ax^2-3a^2x+a^3.
\]
This establishes that the image of $\Phi$ is as described in case (d) of Theorem \ref{MainTheorem}. By Lemma \ref{CM ec} the image of $\Phi$ is contained in $X_{c_1,c_2,c_3}$ with $c_1=2c_2=2c_3$.
\end{proof}

\section{Arithmetic consequences}

In this section we bound the number of solutions of Equation \eqref{eq123}. We will apply Theorem 1.1'' of \cite{GGK} which we now state:

\begin{theorem}[Gao--Ge--Kuhne]\label{ggk}
Let $F$ be an algebraically closed field of characteristic zero. Let $A/F$ be an abelian variety of dimension $g\geq 1$ and let $\Gamma$ be a finite rank subgroup of $A(F)$. Let $L$ be an ample line bundle on $A$, let $X\subseteq A$ be a closed irreducible subvariety, and let $U$ be the complement of the Kawamata locus of $A$ in $X$. There exists a constant $c(g,\mathrm{deg}_L(X))$ such that
$$\# U(F)\cap\Gamma\leq c(g,\mathrm{deg}_LX)^{\mathrm{rk}\Gamma+1}.$$
\end{theorem}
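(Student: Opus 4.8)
The plan is to prove this exactly along the lines of Gao--Ge--K\"uhne, organizing the argument around a stratification of $S:=X(F)\cap\Gamma$ by canonical height. First I would reduce to the arithmetic setting: by a standard specialization argument one may assume $A$, $X$ and $\Gamma$ are all defined over $\overline{\Q}$, and by enlarging $\Gamma$ one may assume it is the division hull of a finitely generated subgroup, so that $V:=\Gamma\otimes\R$ is a finite-dimensional real vector space of dimension $\mathrm{rk}(\Gamma)$ carrying the positive semidefinite quadratic form $\hat h$ attached to a fixed symmetric ample line bundle $L$ (enlarging $L$ to be symmetric changes $\deg_L X$ only by a bounded factor). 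The qualitative Mordell--Lang theorem (Faltings--Vojta--McQuillan) already gives that $S$ lies in finitely many translates of abelian subvarieties contained in $X$; these translates are precisely what the Kawamata locus removes, so the remaining task is purely quantitative: bound $\#\bigl(U(F)\cap\Gamma\bigr)$ uniformly.

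Next I would split $U(F)\cap\Gamma$ into \emph{small-height} and \emph{large-height} points relative to a threshold $\varepsilon_0=\varepsilon_0(g,\deg_L X)>0$. For the small-height range I would invoke the uniform Bogomolov-type lower bound: on $X$ minus its special (Ueno--Kawamata) locus the normalized height is bounded below by such an $\varepsilon_0$ depending only on $g$ and $\deg_L X$ --- this is the uniform Bogomolov conjecture, available through K\"uhne's equidistribution theorem in families of abelian varieties (and, alternatively, Yuan's approach). Hence the small-height points of $U(F)\cap\Gamma$ all lie in $\{P:\hat h(P)<\varepsilon_0\}$, a set which, being contained in a ball of bounded radius in the lattice $\Gamma\cap\langle\text{finite generators}\rangle$, has cardinality at most $c_1(g,\deg_L X)^{\mathrm{rk}(\Gamma)}$ by a sphere-packing count in $V$.

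For the large-height points I would run a Vojta-type inequality together with a Mumford gap principle, in the quantitative form due to R\'emond. Covering the unit sphere of $(V,\hat h)$ by $c_2(g,\deg_L X)^{\mathrm{rk}(\Gamma)}$ narrow cones, one uses the inequality to show that in each cone at most a bounded number of points of $S$ can simultaneously lie on $X$, have height above $\varepsilon_0$, and avoid every translate of an abelian subvariety inside $X$; summing over cones bounds the large-height contribution by $c_3(g,\deg_L X)^{\mathrm{rk}(\Gamma)+1}$. Combining the two ranges and absorbing constants yields the asserted bound $c(g,\deg_L X)^{\mathrm{rk}(\Gamma)+1}$.

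The main obstacle --- and the genuinely new content over the classical Faltings--Vojta--R\'emond estimates --- is obtaining \emph{all} constants independent of $A$ itself and of the Faltings height and arithmetic complexity of $X$. In the large-height range this forces one to bound the number of limiting directions of $S$ on the sphere uniformly, which is exactly the ``New Gap Principle'' of Dimitrov--Gao--Habegger (and Gao--Habegger); in the small-height range it is the upgrade from Bogomolov's conjecture to its uniform version via K\"uhne's equidistribution that removes the dependence on $A$. I expect the delicate part to be assembling these two uniform inputs compatibly: maintaining the same symmetric ample $L$ and the same thresholds throughout, and controlling how the height stratification interacts with the (non-uniform, but finite) stratification of $X$ by the abelian subvariety translates removed in passing to $U$.
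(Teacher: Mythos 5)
This statement is not proved in the paper at all: it is quoted verbatim (as Theorem 1.1'' of Gao--Ge--K\"uhne \cite{GGK}) and then applied, so there is no internal proof to compare against; what you have written is an outline of the published proof of the uniform Mordell--Lang conjecture itself. Your outline does track the genuine strategy of that work --- specialization to $\overline{\Q}$, a height threshold, a uniform ``small points'' input coming from K\"uhne's equidistribution, and R\'emond's quantitative Vojta and Mumford inequalities with a cone decomposition for the large points --- but as a proof it has a concrete gap in the small-height range.

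The uniform Bogomolov theorem does not say that $\hat h$ is bounded below by $\varepsilon_0$ at \emph{every} point of $X$ off the Ueno--Kawamata locus; it controls the essential minimum, and boundedly many points of height below the threshold (for instance torsion points of $A$ lying on $X$ but on no positive-dimensional special subvariety) can perfectly well lie in $U$. Your attempt to count the small points by sphere packing in $\Gamma\otimes\R$ then fails outright: a finite rank group is not a lattice --- it may contain the entire (infinite) torsion subgroup and division points of arbitrarily small positive height --- so the set $\{P\in\Gamma:\hat h(P)<\varepsilon_0\}$ need not be finite and admits no packing bound. In the actual argument the small points of $U(F)\cap\Gamma$ are bounded directly by the New Gap Principle (Dimitrov--Gao--Habegger for curves, K\"uhne and Gao--Ge--K\"uhne in general): the number of points of $X$ outside the special locus with $\hat h\le c_1\max(1,h_{\mathrm{Fal}}(A))$ is at most a constant $c_2(g,\deg_L X)$, and the factor exponential in $\mathrm{rk}(\Gamma)$ arises only from the large-height range via the cone covering and R\'emond's estimates. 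A complete proof would also need to treat the finite rank (rather than finitely generated) hypothesis and the descent from an arbitrary algebraically closed field of characteristic zero to $\overline{\Q}$ with more care than ``a standard specialization argument,'' since the Kawamata locus and $\deg_L X$ must be preserved under that reduction.
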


Let $E/\C$ be an elliptic curve and $\pi_j\colon E^3\to E$ as in the previous section. We consider the invertible sheaf
$$\mathcal{L}=\mathcal{O}(\pi_1^*e_E+\pi_2^*e_E+\pi_3^*e_E)$$
of $E^3$. As in Subsection 5.5 of \cite{GFP}, it can be proved that

\begin{lemma}
The sheaf $\mathcal{L}$ is ample and symmetric, and $\mathrm{deg}_{\mathcal{L}}X_{c_1,c_2,c_3}\leq 96.$
\end{lemma}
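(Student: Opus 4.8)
The plan is to establish both assertions by a direct computation on $E^3$ using standard intersection theory on products of elliptic curves. First I would recall the structure of the N\'eron--Severi group of $E^3$: the classes $f_j := \pi_j^* e_E$ (fibers of the projections) together with the graphs of isogenies generate $\mathrm{NS}(E^3)_{\Q}$, and the intersection numbers are governed by $f_i \cdot f_j \cdot f_k = 1$ when $i,j,k$ are distinct and $f_i^2 = 0$. Ampleness and symmetry of $\L = \OO(f_1+f_2+f_3)$ are then immediate: each $f_j$ is the pullback of the ample (symmetric, since $e_E$ is $2$-torsion-free fixed) divisor $e_E$ on $E$ under the homomorphism $\pi_j$, a sum of pullbacks of symmetric ample classes along surjective homomorphisms is again symmetric, and $\L$ is ample because $f_1+f_2+f_3$ has positive self-intersection and positive intersection with every curve class — concretely, $(f_1+f_2+f_3)^3 = 3!\, f_1 f_2 f_3 = 6 > 0$, and restricting to any subvariety one checks positivity, or one simply invokes that $\OO(e_E)$ is ample on $E$ and the sum of pullbacks under a finite-fibered family of surjections separating points is ample (this is where I would cite the analogous computation in Subsection 5.5 of \cite{GFP}).

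For the degree bound, I would use $\deg_{\L} X_{c_1,c_2,c_3} = \L^2 \cdot [X_{c_1,c_2,c_3}]$ and the fact, recorded in the first lemma of Section~2, that $F_{|X_{c_1,c_2,c_3}} \colon X_{c_1,c_2,c_3} \to (\P^1)^3$ is finite flat of degree $8$. Write $H_j$ for the pullback to $(\P^1)^3$ of a point on the $j$-th factor; then $[\bar Y_{c_1,c_2,c_3}]$, being a divisor of multidegree $(1,1,1)$ in $(\P^1)^3$ (it is cut out by the single trilinear equation $c_1 y_2 y_3 + c_2 y_1 y_3 + c_3 y_1 y_2 = 0$ that appeared in the proof of Lemma~\ref{Lemma}), has class $H_1 + H_2 + H_3$. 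Pulling back along the degree-$8$ map $F$ and using that $F^* H_j$ is (a multiple of) $f_j$ — more precisely $F^* H_j = 2 f_j$ since the $x$-coordinate map $E \to \P^1$ has degree $2$ — one gets $[X_{c_1,c_2,c_3}] = F^*[\bar Y_{c_1,c_2,c_3}] = 2(f_1+f_2+f_3)$ as a divisor class on $E^3$ (being careful that $X$ is the full preimage, counted with multiplicity $1$). Then
\[
\deg_{\L} X_{c_1,c_2,c_3} = (f_1+f_2+f_3)^2 \cdot 2(f_1+f_2+f_3) = 2(f_1+f_2+f_3)^3 = 2 \cdot 6 = 12,
\]
which is well under $96$; the stated bound $96$ is a safe over-estimate that I would obtain more crudely by bounding $\deg_{\L} X \le 8 \cdot \deg_{(H_1+H_2+H_3)}\bar Y \cdot 2^{\dim} $-type inequalities without tracking the exact pushforward, exactly as in \cite{GFP}.

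The main obstacle, and the step requiring the most care, is the bookkeeping in the identity $[X_{c_1,c_2,c_3}] = F^*[\bar Y_{c_1,c_2,c_3}]$: one must verify that $X_{c_1,c_2,c_3} = F^{-1}(\bar Y_{c_1,c_2,c_3})$ is reduced (or at least that no excess-intersection corrections are needed), that the boundary lines of $\bar Y_{c_1,c_2,c_3}$ lying in $(\P^1)^3 \setminus \A^3$ do not contribute extra components beyond the three curves $E \times \{e_E\} \times \{e_E\}$ and its permutations — here the analysis of $\bar Y_{c_1,c_2,c_3} \setminus Y_{c_1,c_2,c_3}$ from the proof of Lemma~\ref{Lemma} is exactly what is needed — and that $F^* H_j = 2 f_j$ with no ramification-divisor corrections in the class (which holds because we are pulling back divisor classes, not canonical classes). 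Once these finiteness-and-flatness facts are in hand (supplied by the cited first lemma of the section and the computations of \cite{GFP}), the intersection number is a one-line multilinear expansion. I would therefore structure the proof as: (1) symmetry and ampleness of $\L$ from the corresponding properties of $\OO(e_E)$ on $E$ and pullback along $\pi_j$; (2) class computation $[X_{c_1,c_2,c_3}] = 2(f_1+f_2+f_3)$ via the finite flat degree-$8$ map $F$ and the multidegree of $\bar Y_{c_1,c_2,c_3}$; (3) the elementary intersection-number evaluation giving $\deg_{\L} X_{c_1,c_2,c_3} = 12 \le 96$, with the weaker $96$ recorded as in \cite{GFP} to match the level of precision actually used downstream.
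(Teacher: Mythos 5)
Your proposal is correct and takes essentially the same route as the paper, which gives no details beyond deferring to the intersection-theoretic computation of Subsection 5.5 of \cite{GFP}: symmetry and ampleness of $\L$ come from $\OO_E(e_E)$ being symmetric and ample on each factor, and the degree bound comes from pulling back the multidegree-$(1,1,1)$ class of $\bar{Y}_{c_1,c_2,c_3}$ along the finite flat degree-$8$ map $F$, using $F^*H_j=2f_j$ and $(f_1+f_2+f_3)^3=6$. Your sharper value $\deg_{\L}X_{c_1,c_2,c_3}\leq 12$ is consistent with, and stronger than, the stated bound $96$, which is only a crude safe estimate needed for the application of Theorem \ref{ggk}.
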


Given a triple $(c_1,c_2,c_3)$ of non-zero complex numbers and the associated surface $X_{c_1,c_2,c_3}\subseteq E^3$, denote by $Z_{c_1,c_2,c_3}$ the Kawamata locus of $X_{c_1,c_2,c_3}$. By our work in Section \ref{sectionmain}, we now know that $Z_{c_1,c_2,c_3}$ is the union of the curves of Theorem \ref{MainTheorem} corresponding to our choice of $(c_1,c_2,c_3)$ and $E$.

We now can prove our main arithmetic result:

\begin{proof}[Proof of Theorem \ref{arithmain}]
We apply here Theorem \ref{ggk} with $F=\mathbb{Q}^{alg}$, $A=E^3$, $L=\mathcal{L}$, $X=X_{c_1,c_2,c_3}$, and $U=X_{c_1,c_2,c_3}\setminus Z_{c_1,c_2,c_3}$. For each choice of $i=1,\ldots,96$, Theorem \ref{ggk} gives us an absolute constant $c(3,i)$. Choosing $D=3\max_{1\leq i\leq 96}c(3,i)$, we obtain
$$\# (X_{c_1,c_2,c_3}\setminus Z_{c_1,c_2,c_3})(K)\cap\Gamma\leq \# (X_{c_1,c_2,c_3}\setminus Z_{c_1,c_2,c_3})(F)\cap \Gamma\leq D^{\mathrm{rk}(\Gamma)+1}.$$
\end{proof}

\begin{proof}[Proof of Corollary \ref{cor1}]
From the conditions on $E$ and the fact that no subsum of $c_1+c_2+c_3$ is equal to zero, the Kawamata locus of $X_{c_1,c_2,c_3}$ is formed by the curves of type (a) of Theorem \ref{MainTheorem}. Applying Theorem \ref{arithmain} the result follows.
\end{proof}

\end{document}